\documentclass[pdflatex,sn-mathphys-ay]{sn-jnl}% Math and Physical Sciences Author Year Reference Style
\usepackage{graphicx}%

\usepackage{caption}
\usepackage{subcaption} %

\usepackage{multirow}%
\usepackage{amsmath,amssymb,amsfonts}%
\usepackage{amsthm}%
\usepackage{mathrsfs}%
\usepackage[title]{appendix}%
\usepackage{xcolor}%
\usepackage{textcomp}%
\usepackage{manyfoot}%
\usepackage{booktabs}%
\usepackage{algorithm}%
\usepackage{algorithmicx}%
\usepackage{algpseudocode}%
\usepackage{listings}%
\usepackage{lineno}
\usepackage{setspace}
\newtheorem{thm}{Theorem}[section]
\newtheorem{prop}[thm]{Proposition}

\newtheorem{lem}[thm]{Lemma}
\theoremstyle{definition}

\theoremstyle{remark}

\numberwithin{equation}{section}

\makeatletter % `@' now normal "letter"
\@addtoreset{equation}{section}
\makeatother  % `@' is restored as "non-letter"
\renewcommand\theequation{\oldstylenums{\thesection}%
	.\oldstylenums{\arabic{equation}}}
\newcommand\norm[1]{\lVert1\rVert}
\newcommand\abs[1]{\lvert1\rvert}

\newcommand\set[1]{\left\{1\right\}}

\usepackage{xcolor}

\usepackage{ulem}   % 提供 \sout

\usepackage{ifthen}
\newboolean{showedits}
\setboolean{showedits}{true}   % 这里改成 false 即可隐藏所有标记

\newcommand{\remove}[1]{%
	\ifthenelse{\boolean{showedits}}%
	{\textcolor{red}{\sout{#1}}}%
	{}%
}

\newcommand{\add}[1]{%
	\ifthenelse{\boolean{showedits}}%
	{\textcolor{blue}{#1}}%
	{#1}% final 模式下仍然保留内容
}

\newcommand{\replace}[2]{%
	\ifthenelse{\boolean{showedits}}%
	{\remove{#1}\add{#2}}%
	{#2}% final 模式下只保留新内容
}
\usepackage{footmisc}
\begin{document}

\title[Global stability analysis of a mathematical model from Alzheimer's disease]{Global stability analysis of a mathematical model from Alzheimer’s disease}

%%=============================================================%%
%% GivenName	-> \fnm{Joergen W.}
%% Particle	-> \spfx{van der} -> surname prefix
%% FamilyName	-> \sur{Ploeg}
%% Suffix	-> \sfx{IV}
%% \author*[1,2]{\fnm{Joergen W.} \spfx{van der} \sur{Ploeg} 
%%  \sfx{IV}}\email{iauthor@gmail.com}
%%=============================================================%%

\author[1]{\fnm{Ruoyun} \sur{Lang}}\email{langry@mail.ustc.edu.cn}
\author*[2]{\fnm{Hui} \sur{Zhou}}\email{zhouh16@mail.ustc.edu.cn}
\equalcont{These authors contributed equally to this work.}

\affil[1]{School of Mathematical Sciences, University of Science and Technology of China, Hefei, Anhui 230026, P. R. China}
\affil[2]{School of Mathematics and Statistics, Hefei Normal University, Hefei, Anhui 230601, P. R. China}

%%==================================%%
%% Sample for unstructured abstract %%
%%==================================%%

\abstract{This study focuses on a mathematical model of Alzheimer's disease involving $\beta$-amyloid, cellular prion protein and their complex. The global asymptotic stability of the model indicates that the complex continues to induce neuronal damage regardless of the initial states. To investigate the dynamics of this system, we have rigorously proved that when the formation rate of new plaques is zero, the system is unconditional globally asymptotically stable without any limitation proposed in previous work. Numerical simulations further validate the theoretical analysis, regardless of the random initial state, demonstrating that the system consistently converges to a unique positive equilibrium. From a therapeutic perspective, we propose targeted therapeutic strategies and verify their effectiveness through numerical simulations. These results provide a universal theoretical basis for understanding dynamic mechanisms of Alzheimer's disease and offer critical guidance for developing targeted therapeutics.}

\keywords{Global asymptotic stability, Alzheimer's disease model, Second compound matrix, Therapeutic strategies}

%%\pacs[JEL Classification]{D8, H51}

\pacs[MSC Classification]{34D05, 37N25, 37C75}

\footnotetext{H. Zhou is supported by the National Natural Science Foundation of China (No.12517710), and R. Lang is partially supported by the National Natural Science Foundation of China (No.12331006), the Strategic Priority Research Program of CAS (No.XDB0900100) and the National Key R\&D Program of China (No.2024YFA1013603, 2024YFA1013600).}

\maketitle
\section{Introduction}\label{secint}
Alzheimer's disease is one of the most common neurodegenerative diseases among the elderly population \citep{alzfacts2025,masters2015}. Characterized by progressive cognitive decline, brain atrophy, and pathological deposits in the brain \citep{alzheimer1907,stelzmann1995}, it has become a significant health concern facing aging populations worldwide \citep{khan2024,alzfacts2025}. A growing body of evidence indicates that the cerebral accumulation of \textit{$\beta$-amyloid peptide} ($A\beta$) is the primary driver of Alzheimer's disease pathogenesis \citep{hardy2002,herrup2015,jack2013,hardy1992}. Specifically, $A\beta$ is generated via the aberrant cleavage of \textit{amyloid precursor protein} and contributes to Aslzheimer's disease progression through polymorphic aggregation. These \(A\beta\) peptides first aggregate into soluble \(A\beta\) oligomers, which subsequently aggregate into dense, insoluble \(A\beta\) plaques. Among these $A\beta$ forms, oligomers are widely recognized as the principal neurotoxic effectors. Lauren et al. \citep{lauren2009} have demonstrated that \textit{cellular prion protein} (\(\text{PrP}^\text{C}\)) acts as a critical mediator for \(A\beta\) oligomers and induces synaptic dysfunction by disrupting the balance between \(A\beta\) production and clearance. As the global population ages, Alzheimer's disease has emerged as an urgent public health challenge, which highlights the critical need to investigate its underlying pathological mechanisms and develop effective therapeutic interventions \citep{nichols2019}.

%%%%%%%%%%%%

Numerous mathematical models have been developed to characterize \(A\beta\) aggregation dynamics in Alzheimer's disease and to describe the qualitative behavior of this process. Previous studies primarily focus on isolated pathological processes, including the aggregation and clearance kinetics of \(A\beta\) \citep{achdou2013,andrade2021,craft2002,bertsch2016} and the expression and proliferation patterns of \(PrP^{C}\) \citep{pruss2006,hall2004,greer2006,calvez2009}. In order to investigate the interaction mechanism between \(A\beta\) and its neuronal membrane receptor \(PrP^{C}\), Helal et al. \citep{helal2014} proposed an \textit{in vivo} mathematical model introducing the species of the binding as the $A\beta$-x-$PrP^C$ complex. In their formulation, \(f(x, t)\) denotes the density of \(A\beta\) plaques with size $x$ at time $t$ and the size variable $x$ is constrained to \((x_0, +\infty)\). Here, \(x_0\) refers to the minimum size of a stable \(A\beta\) plaque and is quantified as \(x_0 = \varepsilon n\), where \(\varepsilon > 0\) is a positive parameter denoting the mass of a single \(A\beta\) oligomer, and $n$ is a positive integer representing the minimum number of oligomers required to form a stable \(A\beta\) plaque. Let $u(t)$, $p(t)$ and $b(t)$ denote the concentrations of soluble $A\beta$ oligomers, $PrP^C$, and the $A\beta$-x-$PrP^C$ complex, respectively. As consequence, the model takes form as follows:
\begin{align*}\label{1}
	\tag{1.1}
	\begin{cases}
		&\frac{\partial}{\partial t} f(x,t) + u(t) \frac{\partial}{\partial x} [\rho(x) f(x,t)] = -\mu(x) f(x,t) \quad \text{on } (x_0, +\infty) \times (0, +\infty), \\
		&\dot{u} = \lambda_u - \gamma_u u - \tau u p - \sigma b - n N(u) - \frac{1}{\epsilon} u \int_{x_0}^{+\infty} \rho(x) f(x,t) dx \quad \text{on } (0, +\infty), \\
		&\dot{p} = \lambda_p - \gamma_p p - \tau u p + \sigma b \quad \text{on } (0, +\infty), \\
		&\dot{b} = \tau u p - (\sigma + \delta) b \quad \text{on } (0, +\infty),
	\end{cases}
\end{align*}
where

\newcommand{\bcenter}{\mathbin{\vcenter{\hbox{\Large$\bullet$}}}}

$\bcenter \quad  \rho(x):$ the conversion rate of oligomers to plaques;

$\bcenter \quad  \mu(x):$ the plaque degradation rate of oligomers to plaques;

$\bcenter \quad \lambda_u, \lambda_p:$ the sources of $A\beta$ oligomers and $PrP^C$, respectively;

$\bcenter \quad \gamma_u, \gamma_p, \delta:$ the degradation rate of $A\beta$ oligomers, $PrP^C$ and $A\beta$-x-$PrP^{C}$ complex, respectively;

$\bcenter \quad \tau, \sigma:$ the binding and unbinding rate of $A\beta$-x-$PrP^{C}$ complex, respectively.

\noindent  In the second equation of model (\ref{1}), the term \(N(u)\) denotes the formation rate of new \(A\beta\) plaques with critical size \(x_0\), balanced by the boundary condition \(u(t)p(x_0)f(x_0,t)=N\bigl(u(t)\bigr)\), and the term $\int_{x_0}^{+\infty} \rho(x) f(x,t) dx$ represents the total polymerization. The conversion rate \(\rho(x)\) follows the relationship \(\rho(x)\sim x^\theta\) (\(\theta \in (0, 1)\)), owing to the fact that larger \(A\beta\) plaques display stronger binding affinity for \(A\beta\) oligomers, with the increase in this affinity being suppressed by the saturation of surface binding sites. All parameters \(\lambda_u, \lambda_p, \gamma_u$, $\gamma_p, \delta, \sigma, \text{and } \tau\) are positive, all functions \(\rho(x), \mu(x), \text{and } N(u)\) are non-negative, and all initial values in the model are non-negative (i.e., \(f(x,0) \ge 0\) for almost all \(x > x_0\), \(u(0) \ge 0\), \(p(0) \ge 0\) and \(b(0) \ge 0\)). Under the assumptions \(\mu(x) \in W^{1,\infty}([x_0, \infty))\) and \(N(u) \in W^{1,\infty}_{\text{loc}}(\mathbb{R}_+)\) with \(N(0)=0\), Helal et al. \citep{helal2014} prove the well-posedness of model (1), where \(W^{1,\infty}([x_0, \infty))\) is the global first-order essentially bounded Sobolev space on \([x_0, \infty)\) and \(W^{1,\infty}_{\text{loc}}(\mathbb{R}_+)\) is the local counterpart on \(\mathbb{R}_+\) ($\mathbb{R}_+$ denotes the set of all positive real numbers).

In the brain, the total population of \(A\beta\) plaques which defined as \(A=\int_{x_0}^{+\infty} f(x,t) dx\) is one of the core observable indicators \citep{destrooper2016}. When focusing on the total \(A\beta\) plaques, the conversion rate of oligomers to plaques \(\rho(x)\) and plaques degradation rate \(\mu(x)\) can be simplified to constant values, which reflect the average dynamic characteristics of total \(A\beta\) plaques population. Considering multi-molecular cooperative aggregation, De Strooper and Karran propose the new plaque formation term \(N(u) \) can be expresses by \(N(u) = \alpha u^n\), where non-negative constant \(\alpha \) denotes the average formation rate to a new plaque. Without loss of generality, one can normalize the mass parameter \(\varepsilon\) of a single \(A\beta\) oligomer to 1. By integrating the first equation of system (\ref{1}) over \((x_0, +\infty)\) and focusing on the key variables \(A\beta\) plaques, $A\beta$ oligomers, $PrP^C$ and $A\beta$-x-$PrP^C$ complex, model (1) can be turned out to be the following system:
\begin{align*}\label{Aupb}			
	\tag{1.2}			
	\begin{cases}				
		\dot{A} &= \alpha u^n - \mu A, \\				
		\dot{u} &= \lambda_u - \gamma_u u - \tau u p + \sigma b - \alpha n u^n - \rho u A, \\				
		\dot{p} &= \lambda_p - \gamma_p p - \tau u p + \sigma b, \\				
		\dot{b} &= \tau u p - (\sigma + \delta) b.			
	\end{cases}		
\end{align*}

For system (\ref{Aupb}), Helal et al. \citep{helal2014} obtain that the positive equilibrium $\tilde{E} = (\tilde{A}, \tilde{u}, \tilde{p}, \tilde{b})$ (see section \ref{secmain} for its explicit expression) always exist and is unique, and then $\tilde{E}$ is locally asymptotically stable. Based on the local stability of the unique equilibrium and its correspondence to the inherent dynamic balance between \(A\beta\) production and clearance in the brain, we conjecture that \(\tilde{E}\) is globally asymptotically stable. Moreover, it is crucial to verify this conjectured global asymptotic stability for understanding the long-term regulatory mechanisms governing \(A\beta\) aggregation in Alzheimer’s disease pathology. However, for 4-dimensional system (\ref{Aupb}), it is a challenging task to prove the specific system is globally asymptotically stable. For this purpose, considering that the new plaque formation is absent in the early stage of Alzheimer’s disease (i.e., \(\alpha = 0\)), Helal et al. establish a \textit{conditional} global asymptotic stability for system (\ref{Aupb}) by imposing the following parameter constraint:
\begin{flalign*}
	\label{H}
	\text{\textbf{(H)}} \quad \left( 1 + 2 \dfrac{\delta + \gamma_u}{\sigma} \right) > \frac{\delta}{2\gamma_p} > \dfrac{\gamma_p}{\sigma}. &&
\end{flalign*}

%%%%%%%%%%%%%%%%%%
\noindent Notably, this proof is achieved by means of the Lyapunov function method, and it is crucial to clarify that condition (\textbf{H}) originates not from the underlying intrinsic rules of \(A\beta\) aggregation dynamics in Alzheimer’s disease, but from the technical limitations inherent of this method. In a biological context, parameters exhibit substantial variability across different experimental settings and physiological states, and this variability renders strict compliance with this constraint impractical in practice. Our numerical simulations, which are detailed in section \ref{sec4.1}, demonstrate that when this condition is completely removed, the system (\ref{Aupb}) still converges to a unique stable equilibrium. These findings directly confirm that the system exhibits $unconditional$ global asymptotic stability and consequently that the parameter constraints included in condition (\textbf{H}) are unreasonable.

Accordingly, this paper focuses on investigating the $unconditional$ global asymptotic stability of the equilibrium ror system (\ref{Aupb}) with \(\alpha = 0\). Based on the structural properties of system (\ref{Aupb}), the system reduces partially to a three-dimensional K-competitive system in the absence of the new plaque formation rate. Given that three-dimensional K-competitive systems possess the Poincaré–Bendixson property, the challenging task of proving the global asymptotic stability of system (\ref{Aupb}) is simplified to verifying the orbital stability of any potential positive periodic solutions within the relevant positive invariant set. To this end, we utilize the stability criterion based on \textit{second compound equations} proposed by Muldowney \citep{muldowney1990}, thereby establishing our main results.

\renewcommand{\thethm}{A}
\begin{thm}\label{thm1}
	For system (\ref{Aupb}), $\tilde{E}$ is always globally asymptotically stable.
\end{thm}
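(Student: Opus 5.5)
With $\alpha=0$ the $A$-equation decouples as $\dot A=-\mu A$, so $A(t)=A(0)e^{-\mu t}\to 0$ and $\tilde A=0$. The plan is therefore to first prove global asymptotic stability of the limiting three-dimensional system
\begin{align*}
\dot u=\lambda_u-\gamma_u u-\tau up+\sigma b,\qquad \dot p=\lambda_p-\gamma_p p-\tau up+\sigma b,\qquad \dot b=\tau up-(\sigma+\delta)b
\end{align*}
at $(\tilde u,\tilde p,\tilde b)$. We then lift the result to the full system (\ref{Aupb}). For the lift, the $(u,p,b)$-subsystem of (\ref{Aupb}) is asymptotically autonomous, with the perturbation $-\rho uA$ decaying exponentially. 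Since the limit system will have a globally asymptotically stable equilibrium with no other chain-recurrent sets, the Markus/Thieme theory of asymptotically autonomous systems gives convergence of every bounded positive solution. Local stability of $\tilde E$ is already known from Helal et al., so together this gives global asymptotic stability.

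For the reduced system I will carry out four steps.

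First, establish positive invariance and dissipativity. Summing the equations gives $\frac{d}{dt}(p+b)\le \lambda_p-\min(\gamma_p,\delta)(p+b)$ and $\frac{d}{dt}(u+b)\le \lambda_u-\min(\gamma_u,\delta)(u+b)$. Hence there is a compact absorbing set $K$ in $\mathbb{R}^3_{\ge 0}$. On the boundary the vector field points inward ($\dot u=\lambda_u>0$ at $u=0$, and so on), so the system is uniformly persistent and $\tilde E$ is the unique interior equilibrium.

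Second, verify competitiveness. The Jacobian has off-diagonal entries $\partial_p\dot u=-\tau u$, $\partial_b\dot u=\sigma$, $\partial_u\dot p=-\tau p$, $\partial_b\dot p=\sigma$, $\partial_u\dot b=\tau p$, $\partial_p\dot b=\tau u$. With $H=\mathrm{diag}(1,1,-1)$, the matrix $HJH$ has off-diagonal entries $-\tau u,-\sigma,-\tau p,-\sigma,-\tau p,-\tau u$, all $\le 0$. So the system is competitive with respect to the orthant cone $K_H$ and is irreducible in the interior. By Smith/Hirsch, three-dimensional competitive systems satisfy the Poincaré–Bendixson property: any compact omega-limit set containing no equilibrium is a closed orbit. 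Since $\tilde E$ is locally asymptotically stable, it then suffices, by Zhu–Smith or Li–Muldowney, to show that every nontrivial periodic orbit is orbitally asymptotically stable. This rules out periodic orbits, because the basin of $\tilde E$ would otherwise have to be surrounded by an unstable invariant structure.

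Third, and this is the main obstacle, apply Muldowney's criterion. For a periodic solution $(u,p,b)(t)$ we must show that the second compound system $\dot z=J^{[2]}z$ is asymptotically stable. The second compound matrix in the coordinate order $(u,p),(u,b),(p,b)$ is
\begin{align*}
J^{[2]}=\begin{pmatrix} -\gamma_u-\gamma_p-\tau p-\tau u & \sigma & -\sigma\\ \tau u & -\gamma_u-\tau p-\sigma-\delta & -\tau u\\ -\tau p & -\tau p & -\gamma_p-\tau u-\sigma-\delta\end{pmatrix}.
\end{align*}
I will build a Lyapunov function $V=\max\{|z_1|,\ |z_2|+|z_3|\}$ possibly with weights, or $V=\sup\{|z_1|,|z_2|,|z_3|\}$ with sign choices exploiting that in $z_2,z_3$ the entries $-\tau u$ and $-\tau p$ partially cancel. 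The trick I will try first is the change of variables $w_2=z_2-z_3$ (or $z_2+z_3$). The coupling from $z_1$ is then $\sigma z_1-(-\sigma z_1)$-type, and the $\tau$ terms cancel in the diagonal blocks. After this, each row should be diagonally dominant with negative margin of order $\min(\gamma_u,\gamma_p,\delta)$, up to terms like $\tau u$ that can be absorbed using $\dot u/u$ or $\dot p/p$ identities integrated over a period (their integral is zero). Concretely, I expect bounds of the form $D^+V\le(\dot u/u-\gamma_u)V$ or similar on each region, whose integral over a period is negative, giving $\int_0^\omega \mu(t)\,dt<0$ as in Muldowney's theorem. Finding the right weights or norm so that every case has such an exact-derivative bound, with no parameter condition, is the delicate part. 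If one fixed norm fails, I will split into the two sign regions of $z_2z_3$ separately.

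Finally, conclude: there are no periodic orbits, so by Poincaré–Bendixson every omega-limit set in the interior is $\tilde E$. Combining this with the asymptotically autonomous argument for $A\to0$ gives Theorem \ref{thm1}.
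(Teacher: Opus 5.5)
Your architecture matches the paper's: pass to the limiting $(u,p,b)$ system, use competitiveness with respect to $\{u\ge 0,\,p\ge 0,\,b\le 0\}$ and the three-dimensional Poincar\'e--Bendixson property, and reduce (Zhu--Smith style) to orbital asymptotic stability of periodic orbits via Muldowney's second compound criterion. You then lift back to (\ref{Aupb}) with the theory of asymptotically autonomous systems, as the paper does.

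The gap is your third step, which is the only one with real content. You never exhibit a norm under which $\dot z=J^{[2]}z$ is asymptotically stable, and the two candidates you write down fail in general. For $\max\{|z_1|,|z_2|,|z_3|\}$ the row bounds are $2\sigma-(\gamma_u+\gamma_p+\tau(u+p))$, $2\tau u-\tau p-\gamma_u-\sigma-\delta$ and $2\tau p-\tau u-\gamma_p-\sigma-\delta$. For $\max\{|z_1|,|z_2|+|z_3|\}$ the standard estimates give $\sigma-(\gamma_u+\gamma_p+\tau(u+p))$ and $\tau(u+p)-\min\{\gamma_u,\gamma_p\}-\sigma-\delta$. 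None of these has a definite sign without parameter restrictions. Your fallback (absorbing the excess with $\dot u/u$-type terms integrated over a period, or splitting on the sign of $z_2z_3$) is not carried out, and it is unclear which exact derivatives would absorb the $\sigma$ and $\tau(u+p)$ excesses.

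The missing observation is that the plain $\ell^1$ norm $W=|z_1|+|z_2|+|z_3|$ works with no correction terms. Each \emph{column} of $J^{[2]}$ (diagonal entry plus absolute values of the off-diagonal entries) sums to a state-independent negative constant:
\begin{align*}
-(\gamma_u+\gamma_p+\tau p+\tau u)+\tau u+\tau p&=-(\gamma_u+\gamma_p),\\
\sigma-(\gamma_u+\tau p+\sigma+\delta)+\tau p&=-(\gamma_u+\delta),\\
\sigma+\tau u-(\gamma_p+\tau u+\sigma+\delta)&=-(\gamma_p+\delta).
\end{align*}
Hence $D_+W\le -CW$ with $C=\min\{\gamma_u+\gamma_p,\gamma_u+\delta,\gamma_p+\delta\}>0$, which is exactly the paper's Lemma~\ref{4.5}. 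With this estimate the rest of your plan goes through as written. Because the bound holds uniformly on the whole state space rather than only along periodic orbits, you could even bypass competitiveness and apply the Li--Muldowney Bendixson-type criterion directly.
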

\renewcommand{\thethm}{\thesection.\arabic{thm}}

The \textit{unconditional} global asymptotic stability of the unique positive equilibrium, established in Theorem A, holds critical biological and clinical implications for Alzheimer’s disease. Specifically, this equilibrium corresponds to a pathological steady state marked by persistent \(A\beta\) plaques, soluble \(A\beta\) oligomers, \(PrP^C\), and neurotoxic $A\beta$-x-$PrP^C$ complex, which implies the central nervous system is impaired. The global stability of the equilibrium indicates that while interventions may temporarily alter these molecular concentrations, the system will inevitably converge to this pathological steady state. This finding provides a critical therapeutic implication, specifically that instead of striving for a complete cure for Alzheimer’s disease, therapeutic strategies should modulate core kinetic parameters to shift this equilibrium toward a state characterized by minimized neurotoxic species and attenuated neuronal damage.

To validate the theoretical findings and explore therapeutic implications, we further perform specialized numerical simulations in section \ref{sec4.2} targeting the \(A\beta\)-\(PrP^C\) aggregation dynamics of system (\ref{Aupb}). The results clearly reveal the differential effects of various targeting strategies. Although anti-amyloid antibodies commonly used in clinical studies can effectively reduce the formation of soluble oligomers by specifically binding to free amyloid molecules, their binding affinity to preformed $A\beta$-x-$PrP^C$ complex is significantly reduced, resulting in negligible inhibitory effects on toxic complex accumulation. Shifting the target to \(PrP^C\) itself or the $A\beta$-x-$PrP^C$ complex enables more efficient clearance of existing toxic species and suppression of new complex generation at the source. Crucially, by lifting the restrictive parameter condition (\textbf{H}), our analysis of system (\ref{Aupb}) overcomes constraints on parameter ranges and provides critical theoretical support for the development of targeted, long-acting Alzheimer’s disease therapeutic strategies.

The remainder of the paper is organized as follows. In section \ref{secmain}, the main results of this paper are presented. In section \ref{secprf}, we provide a rigorous proof of the main conclusion using monotone dynamical system theory and compound matrix methods. In section \ref{secsim}, we analyze the effects of key parameters on the dynamics through numerical simulations. Section \ref{secdis} is the discussion.

%%%%%%%%%%%%%%%%%%%%%%%%%%%%%%%%%%%%%%%%%%%%%%%%%%%%%%%%%%%%%%
%%%%%%%%%%%%%%%%%%%%%%%%%%%%%%%%%%%%%%%%%%%%%%%%%%%%%%%%%%%%%%
%%%%%%%%%%%%%%%%%			Main Results
%%%%%%%%%%%%%%%%%%%%%%%%%%%%%%%%%%%%%%%%%%%%%%%%%%%%%%%%%%%%%%
%%%%%%%%%%%%%%%%%%%%%%%%%%%%%%%%%%%%%%%%%%%%%%%%%%%%%%%%%%%%%%
\section{Main Results}\label{secmain}
We mainly focus on the global dynamics of system (\ref{Aupb}). Let us define in the present paper our working domain $\Sigma$ as
\begin{equation}\label{Sigma}
	\Sigma= \left\{ (A, u, p, b) \in \mathbb{R}_{+}^{4} : nA + u + p + 2b \leq  \frac{\lambda}{m} \right\}
\end{equation}
where $\lambda = \lambda_{u}+\lambda_{p}$ and $m = \min\{\mu, \gamma_{u}, \gamma_{p}, \delta\}$. For the validity of the choice of $\Sigma$ from a mathematical point of view, one can see more details in \cite[Proposition 1]{helal2014}. Moreover, there exists a unique positive equilibrium $\tilde{E} = (\tilde{A}, \tilde{u}, \tilde{p}, \tilde{b})$ in $\Sigma$ with 

\[ \tilde{A} = \frac{\alpha\tilde{u}^n}{\mu} , \quad \tilde{p} = \frac{\lambda_p }{( \tau^* \tilde{u} + \gamma_p )},\quad   \tilde{b} = \frac{\lambda_p (\tau - \tau^*)\tilde{u}}{\sigma  (\tau^* \tilde{u}+ \gamma_p)}  ,  \]
where $\tau^* = \tau (1 - \frac{\sigma}{\delta + \sigma} )$ and $\tilde{u}$ is the unique positive root of the function $ M(x) = \gamma_p \lambda_u + ax - N(x)$ with $a = \tau^* (\lambda_u - \lambda_p) - \gamma_u \gamma_p$ and $N(x) = \tau^* \gamma_u x^2 + \alpha \gamma_p n x^n + (\alpha \tau^* n + p \gamma_p \frac{\alpha}{\mu}) x^{n+1} + \rho \tau^* \frac{\alpha}{\mu} x^{n+2}$.

In the early stages of Alzheimer's disease, the pathological process is characterized by the initial accumulation of key toxic molecules rather than the massive formation of mature pathological structures. Within the framework of system (\ref{Aupb}), the parameter $\alpha$ is defined as the rate at which soluble $A\beta$ oligomers aggregate to form new insoluble plaques. In consideration of the negligible biological significance of plaque formation in the early stages of Alzheimer's disease, the following system is derived from the model upon setting $\alpha = 0$:

\begin{equation}\label{A0upb}
	\begin{cases}
		\dot{u} = \lambda_u  - \gamma_u u - \tau u p + \sigma b - \rho u A_0 e^{-\mu t}, \\
		\dot{p} = \lambda_p - \gamma_p p - \tau u p + \sigma b, \\
		\dot{b} = \tau u p - (\sigma + \delta) b
	\end{cases}
\end{equation}
with $A(t) = A_0 e^{-\mu t}$. Sence the plaque term vanishes
\[ \lim_{t \to \infty} A(t) = \lim_{t \to \infty}A_0 e^{-\mu t} = 0, \]
i.e. \( \lim\limits_{t \to \infty} -\rho u A_0 e^{-\mu t} = 0 \), the limiting system of (\ref{A0upb}) is

\begin{equation}\label{upb}
	\begin{cases}
		\dot{u} = \lambda_u  - \gamma_u u - \tau u p + \sigma b , \\
		\dot{p} = \lambda_p - \gamma_p p - \tau u p + \sigma b, \\
		\dot{b} = \tau u p - (\sigma + \delta) b.
	\end{cases}
\end{equation}
Consequently, the domain $\Sigma$ reduces to the compact invariant set
\begin{equation}\label{Gamma}
	\Gamma= \left\{ (u, p, b) \in \mathbb{R}_+^3 : u + p + 2b \leq \frac{\lambda}{\tilde{m}} \right\},
\end{equation}
where $\tilde{m}= \min\{ \gamma_{u}, \gamma_{p}, \delta\}\geq m$, which is positively invariant with respect to system (\ref{upb}). The following proposition confirms the validity
of the choice of $\Gamma$ from a mathematical point of view.

\begin{prop}\label{prop2.1}
	The set $\Gamma$ is positively invariant with respect to system (\ref{upb}). Moreover, for any initial value \((u_0, p_0, b_0) \in \mathbb{R}^3_+\), the solutions are strictly positive for all $t > 0$ and remain in the positively invariant set $\Gamma$.
\end{prop}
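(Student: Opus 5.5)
The plan has three parts. We first show that the nonnegative orthant is positively invariant and that solutions starting on its boundary immediately enter the interior. Next we use a weighted total mass $W=u+p+2b$ to show that $\Gamma$ is invariant and attracting. Finally we combine the two. Local existence and uniqueness are immediate, since the right-hand side of system (\ref{upb}) is polynomial, hence locally Lipschitz.

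\emph{Positivity.} Let $(u_0,p_0,b_0)\in\mathbb{R}^3_+$. On the maximal interval of existence, as long as $u,p,b\ge 0$, we have
\[
\dot u \ge \lambda_u-(\gamma_u+\tau p)u,\quad \dot p\ge \lambda_p-(\gamma_p+\tau u)p,\quad \dot b\ge -(\sigma+\delta)b .
\]
By a comparison (variation of constants) argument this gives
\[
u(t)\ge u_0e^{-\int_0^t(\gamma_u+\tau p)}+\lambda_u\int_0^t e^{-\int_s^t(\gamma_u+\tau p)}\,ds>0
\]
for $t>0$, because $\lambda_u>0$; the same bound with $\lambda_p>0$ gives $p(t)>0$. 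For $b$ we write
\[
b(t)=b_0e^{-(\sigma+\delta)t}+\int_0^t e^{-(\sigma+\delta)(t-s)}\tau u(s)p(s)\,ds ,
\]
which is strictly positive for $t>0$ once $u,p>0$ on $(0,t)$.

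To make this rigorous, let $t^*$ be the first time some component vanishes after $t=0$ and derive a contradiction from the formulas above. The step needing the most care is the boundary case where $u_0=0$ or $p_0=0$ at $t=0$. There, positivity of $\dot u(0)=\lambda_u+\sigma b_0>0$ (respectively $\dot p(0)>0$) pushes the trajectory into the interior instantly. Likewise $\dot b(0^+)>0$ once $u,p>0$. This corner-case bookkeeping is the main obstacle, though it is routine.

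\emph{Invariance of $\Gamma$.} With $W=u+p+2b$, adding the equations (the $\tau up$ and $\sigma b$ terms cancel in the combination $\dot u+\dot p+2\dot b$) gives
\[
\dot W=\lambda-\gamma_u u-\gamma_p p-2\delta b\le \lambda-\tilde m W ,
\]
since $u,p,b\ge0$ and $2\delta b\ge \tilde m\,2b$. By the comparison lemma,
\[
W(t)\le \frac{\lambda}{\tilde m}+\Big(W(0)-\frac{\lambda}{\tilde m}\Big)e^{-\tilde m t}.
\]
Two consequences follow. If $W(0)\le\lambda/\tilde m$, then $W(t)\le \lambda/\tilde m$ for all $t\ge 0$, so $\Gamma$ is positively invariant. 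This also shows boundedness, so solutions exist globally. For arbitrary nonnegative data, $\limsup_{t\to\infty}W(t)\le\lambda/\tilde m$. Hence every solution remains in $\mathbb{R}^3_+$, is strictly positive for $t>0$, and enters (or stays in) $\Gamma$, or any neighborhood of it, eventually. This is the sense in which the statement's "remain in $\Gamma$" is to be read for initial data outside $\Gamma$.
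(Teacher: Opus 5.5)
Your proposal is correct and follows essentially the paper's route: the same weighted sum $u+p+2b$, the same inequality $\dot W\le\lambda-\tilde m W$ and the same explicit comparison bound, with your variation-of-constants argument supplying the positivity step that the paper only calls straightforward. Your $\limsup$ reading for data outside $\Gamma$ is the right one, though you should phrase it as entering every neighborhood of $\Gamma$ rather than $\Gamma$ itself, because the paper's claims that $Q(t)\to\lambda/\tilde m$ and that $Q(t)\le\lambda/\tilde m$ for all $t\ge T$ do not follow from the bound, and the latter fails outright when $\gamma_u=\gamma_p=\delta$, where $\dot Q=\lambda-\tilde m Q$ holds with equality.
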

Regarding the stability of system (\ref{upb}), we can obtain the following Theorem \ref{thm2.2}.
\begin{thm}\label{thm2.2}		 		 
	There exists a unique positive steady state \(E^* = (u^*, p^*, b^*)\) of system (\ref{upb}), moreover, this equilibrium is globally asymptotically stable. 		 
\end{thm}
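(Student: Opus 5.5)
The plan is to first settle existence and uniqueness of $E^*$ by direct computation, and then prove global stability by combining the competitive structure of (\ref{upb}) with the Poincaré–Bendixson property of three-dimensional competitive systems and Muldowney's second compound criterion.

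\emph{Existence and uniqueness.} At an equilibrium the third equation gives $b=\tau up/(\sigma+\delta)$. Substituting this into the second equation gives $\lambda_p-\gamma_p p-\tau^* up=0$, so $p=\lambda_p/(\gamma_p+\tau^* u)$. Subtracting the second equation from the first yields $\lambda_u-\gamma_u u=\lambda_p-\gamma_p p$. Eliminating $p$ then reduces the problem to
\[
M_0(u):=\gamma_p\lambda_u+a u-\tau^*\gamma_u u^2=0 ,
\]
which is the case $\alpha=0$ of $M$. Since $M_0(0)>0$ and the leading coefficient is negative, $M_0$ has exactly one positive root $u^*$. The values $p^*,b^*>0$ then follow from the formulas above. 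By Proposition \ref{prop2.1}, $E^*$ is the only equilibrium in $\Gamma$, and it lies in the interior of $\Gamma$.

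\emph{Competitive structure.} The Jacobian is
\[
J=\begin{pmatrix}-\gamma_u-\tau p & -\tau u & \sigma\\ -\tau p & -\gamma_p-\tau u & \sigma\\ \tau p & \tau u & -(\sigma+\delta)\end{pmatrix}.
\]
Conjugating by $D=\mathrm{diag}(1,1,-1)$ makes every off-diagonal entry of $DJD$ nonpositive. Hence (\ref{upb}) is competitive with respect to the cone $K=\{u\ge 0,\,p\ge 0,\,b\le 0\}$, and on $\mathrm{int}\,\Gamma$ the Jacobian is irreducible. By Proposition \ref{prop2.1}, every trajectory enters the compact set $\Gamma$ and stays positive.

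Hirsch's and Smith's Poincaré–Bendixson theorem for three-dimensional competitive irreducible systems then applies. Any compact $\omega$-limit set in $\mathrm{int}\,\Gamma$ that contains no equilibrium is a closed orbit. Since $E^*$ is the unique equilibrium, every $\omega$-limit set is either $\{E^*\}$, a nontrivial periodic orbit, or contains $E^*$ together with homoclinic-type orbits. Local asymptotic stability of $E^*$ is checked directly via Routh–Hurwitz on $J(E^*)$, or imported from \citep{helal2014} with $\alpha=0$; it rules out the last alternative. By the result of Zhu and Smith (via Li–Muldowney), global asymptotic stability then follows once every nontrivial periodic orbit in $\mathrm{int}\,\Gamma$ is shown to be orbitally asymptotically stable, or once no such orbit exists.

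\emph{Main obstacle: the second compound equation.} For a periodic solution $\gamma(t)=(u,p,b)(t)$ of period $\omega$, consider $\dot z=J^{[2]}(\gamma(t))z$, where
\[
J^{[2]}=\begin{pmatrix}-\gamma_u-\gamma_p-\tau(u+p) & \sigma & -\sigma\\ \tau u & -\gamma_u-\tau p-\sigma-\delta & -\tau u\\ -\tau p & -\tau p & -\gamma_p-\tau u-\sigma-\delta\end{pmatrix}.
\]
The task is to show this linear periodic system is asymptotically stable. I would try the Lyapunov function
\[
V(z)=\max\{|z_1|,\ |z_2|+|z_3|\},
\]
possibly with weights $c_i$ adjusted to the $K$-sign pattern. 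The goal is to estimate $D^+V\le -\kappa V$ on each region of the max; each such estimate amounts to a column-sum, i.e.\ Lozinski\u{\i} measure, bound.

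On $\{|z_2|+|z_3|\ge|z_1|\}$, the $z_2,z_3$ column sums involving $\tau u$ and $\tau p$ cancel exactly, leaving rates of order $-(\gamma_{\min}+\delta)+\sigma$. The extra $+\sigma$ from $z_1$ is then absorbed by the $-\sigma$ on the diagonal, yielding the bound $-\min\{\gamma_u,\gamma_p\}-\delta$.

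On $\{|z_1|\ge|z_2|+|z_3|\}$, the first row contributes $-\gamma_u-\gamma_p-\tau(u+p)+\sigma$, and the stray $+\sigma$ is the delicate term. If it cannot be absorbed, I would switch to a weighted norm, $\|P z\|$ with $P=\mathrm{diag}(1,b/u,b/u)$ or similar. The derivative terms $\dot b/b-\dot u/u$ then integrate to zero over a period, and the equilibrium-type identities $\dot b/b=\tau up/b-(\sigma+\delta)$ and $\dot u/u=\lambda_u/u+\sigma b/u-\gamma_u-\tau p$ convert the off-diagonal terms into negative contributions. This gives $\int_0^\omega \mu(\cdot)\,dt<0$, which is Muldowney's criterion. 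Choosing these weights so that both regions close is where I expect the real work.
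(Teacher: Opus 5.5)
Your skeleton matches the paper's. That covers uniqueness of $E^*$ from the quadratic $M_0$ (your derivation is correct), competitiveness with respect to $K$, the Poincar\'e--Bendixson property, local stability of $E^*$, and the reduction to orbital asymptotic stability of periodic orbits via the second compound equation. But the step that actually carries the theorem, asymptotic stability of $\dot z=J^{[2]}(\gamma(t))z$, is left unresolved, and your function $V=\max\{|z_1|,|z_2|+|z_3|\}$ does not close.

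On $\{|z_1|\ge|z_2|+|z_3|\}$ you correctly get the rate $\sigma-\gamma_u-\gamma_p-\tau(u+p)$, which is positive whenever $\sigma$ dominates. Your treatment of the other region confuses rows with columns. The entries $\pm\sigma$ lie in row 1, so they never enter $D^+(|z_2|+|z_3|)$. The first-column entries $\tau u$ and $-\tau p$ do enter it. The correct estimate is
\[
D^+(|z_2|+|z_3|)\le \tau(u+p)|z_1|-(\gamma_u+\sigma+\delta)|z_2|-(\gamma_p+\sigma+\delta)|z_3|,
\]
so on $\{|z_2|+|z_3|\ge|z_1|\}$ the rate is $\tau(u+p)-\min\{\gamma_u,\gamma_p\}-\sigma-\delta$. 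This is again not sign-definite. Making both regions negative with this $V$, or with a constant rescaling of its second component, forces parameter restrictions, which is exactly what the theorem is meant to remove. The weighted norm with $P=\mathrm{diag}(1,b/u,b/u)$ is only a hope, not an argument.

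The missing idea is that no max-type splitting or time-dependent weights are needed. Take the plain $\ell^1$ norm $W=|z_1|+|z_2|+|z_3|$, as the paper does. Its Lozinski\u{\i} measure is the largest column sum of $J^{[2]}$ with off-diagonal entries taken in absolute value. What you wrote for your second region is really this computation for columns 2 and 3: $\sigma-(\gamma_u+\sigma+\delta+\tau p)+\tau p=-(\gamma_u+\delta)$ and $\sigma+\tau u-(\gamma_p+\sigma+\delta+\tau u)=-(\gamma_p+\delta)$. Column 1 gives $-(\gamma_u+\gamma_p+\tau(u+p))+\tau u+\tau p=-(\gamma_u+\gamma_p)$.

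Hence $D^+W\le -CW$ with $C=\min\{\gamma_u+\gamma_p,\gamma_u+\delta,\gamma_p+\delta\}>0$, uniformly on $\mathbb{R}^3_+$. So the compound system is asymptotically stable along every periodic orbit, and Muldowney's theorem gives orbital asymptotic stability; the rest of your argument then goes through. In fact this uniform bound excludes nonconstant periodic orbits outright via the Li--Muldowney Bendixson criterion.
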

Building upon the above global asymptotic stability results for the system (\ref{upb}), we can extend the analysis to the system (\ref{Aupb}). The stability conclusion regarding the equilibrium of system (\ref{Aupb}) is stated in Theorem \ref{thm2.3}.
\begin{thm}\label{thm2.3}
	For system (\ref{Aupb}), the equilibrium $\tilde{E} = (\tilde{A}, \tilde{u}, \tilde{p}, \tilde{b})$ is always globally asymptotically stable with respect to $\Sigma$.
\end{thm}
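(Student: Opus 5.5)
With $\alpha=0$ (the case the paper treats), I will derive Theorem \ref{thm2.3} from Theorem \ref{thm2.2} using the theory of asymptotically autonomous systems. At the equilibrium we have $\tilde A=0$, so $\tilde E=(0,u^*,p^*,b^*)$, where $E^*$ is the equilibrium of system (\ref{upb}) given by Theorem \ref{thm2.2}. The plan has three parts: decouple the $A$-equation, handle the nonautonomous reduced system (\ref{A0upb}), and prove local stability.

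\textbf{Decoupling.} With $\alpha=0$, the first equation of (\ref{Aupb}) is $\dot A=-\mu A$, so $A(t)=A_0e^{-\mu t}\to0$ exponentially. Substituting this into the other equations gives the nonautonomous system (\ref{A0upb}), whose limiting system is (\ref{upb}). Boundedness and positivity come from positive invariance of $\Sigma$ (Helal et al., Proposition 1) together with Proposition \ref{prop2.1}. Hence every forward orbit of (\ref{A0upb}) starting in $\Sigma$ is precompact in $\mathbb{R}^3_+$.

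\textbf{Global attraction.} I will apply the Markus / Thieme theory of asymptotically autonomous semiflows. The $\omega$-limit set $\omega$ of a solution of (\ref{A0upb}) is nonempty, compact and connected. It is also invariant and internally chain transitive for the limiting system (\ref{upb}). By Theorem \ref{thm2.2}, $E^*$ is globally asymptotically stable for (\ref{upb}) on $\Gamma$; in particular it attracts every point of $\mathbb{R}^3_+$, including boundary points, since Proposition \ref{prop2.1} makes solutions immediately positive. Thieme's theorem says that an internally chain transitive set meeting the basin of attraction of an asymptotically stable equilibrium must equal that equilibrium. Therefore $\omega=\{E^*\}$. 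Since the perturbation term $\rho uA_0e^{-\mu t}$ is integrable in time and $u$ is bounded, I could instead use Markus' theorem directly.

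\textbf{Local stability.} Attractivity alone is not enough; I also need Lyapunov stability of $\tilde E$ for the full four-dimensional system. The Jacobian of (\ref{Aupb}) at $\tilde E$ is block triangular: the $A$-row is $(-\mu,0,0,0)$, and the remaining block is the Jacobian of (\ref{upb}) at $E^*$. Its eigenvalues are therefore $-\mu$ together with those of $J(E^*)$. These have negative real parts by the local asymptotic stability of $\tilde E$ proved by Helal et al. One can also check this directly, since $J(E^*)$ has negative diagonal entries and the Routh–Hurwitz conditions hold. So $\tilde E$ is locally asymptotically stable, and combined with global attraction on $\Sigma$ this gives global asymptotic stability.

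\textbf{Main obstacle.} The delicate step is the passage from the limiting system to the nonautonomous one. One must verify the hypotheses of Thieme's result: uniform convergence of the vector field on compact sets, which holds because $\rho uA_0e^{-\mu t}\to0$ uniformly for bounded $u$, and precompactness of orbits. One must also make sure $E^*$ attracts the boundary of $\Gamma$ as well as its interior. I would settle the boundary issue with Proposition \ref{prop2.1}: it forces $\omega$ to be invariant under (\ref{upb}), and every such trajectory enters the interior and then converges to $E^*$.
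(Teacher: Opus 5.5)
Your proposal is correct and follows essentially the same route as the paper: with $\alpha=0$ you solve $A(t)=A_0e^{-\mu t}$, treat (\ref{A0upb}) as asymptotically autonomous with limit system (\ref{upb}), and combine the Mischaikow--Smith--Thieme/Markus result (the $\omega$-limit set is internally chain transitive for the limit system) with Theorem \ref{thm2.2} to force $\omega=\{E^*\}$. Two points in your version are ones the paper's proof leaves implicit: the Lyapunov-stability half, via the block-triangular Jacobian with eigenvalues $-\mu$ and those of $J(E^*)$, and the check that $\omega$ lies in the basin of $E^*$, including boundary points.
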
	

%%%%%%%%%%%%%%%%%%%%%		

%%%%%%%%%%%%%%%%%%%%%%%%%%%%%%%%%%%%%%%%%%%%%%%%%%%%%%%%%%%%%%
%%%%%%%%%%%%%%%%%%%%%%%%%%%%%%%%%%%%%%%%%%%%%%%%%%%%%%%%%%%%%%
%%%%%%%%%%%%%%%%%			Proof
%%%%%%%%%%%%%%%%%%%%%%%%%%%%%%%%%%%%%%%%%%%%%%%%%%%%%%%%%%%%%%
%%%%%%%%%%%%%%%%%%%%%%%%%%%%%%%%%%%%%%%%%%%%%%%%%%%%%%%%%%%%%%
\section{Proof of Main Results}\label{secprf}

In this section, we give the proof of main results. To establish the positive invariance of \(\Gamma\) and the positivity of solutions to system \eqref{upb}, we provide the following proof. 

\begin{proof}[Proof of Proposition 2.1]
	Let (u,p,b) be any solution of system (\ref{upb}) with the initial value $(u(0),p(0),b(0))\in \Gamma$. By the form of system (\ref{upb}), it is straightforward to verify that $(u(t),p(t),b(t)) \in \mathbb{R}^3_+$ for any $t\geq 0$. Now, define the function  \(Q(t)=u(t)+p(t)+2b(t)\). According to system (\ref{upb}), we note that
	\begin{align*}
		\dot{Q}&=\dot{u}+\dot{p}+2\dot{b}\\
		&=(\lambda_u-\gamma_uu-\tau up+\sigma b)+(\lambda_p-\gamma_pp-\tau up+\sigma b)+2(\tau up - (\sigma+\delta)b)\\
		&=\lambda_u+\lambda_p-\gamma_uu-\gamma_pp - 2\delta b\\
		&\leq\lambda-\tilde{m}(u + p + 2b)\\
		&=\lambda-\tilde{m}Q.
	\end{align*}
	This leads to inequality \(\dot{Q} \leq \lambda - \tilde{m}Q\), which can be deformed into 
	\begin{equation*}
		\dot{Q} + \tilde{m}Q \leq \lambda.
	\end{equation*}
	Multiplying both sides simultaneously by \(e^{\tilde{m}t}\), then
	\begin{equation*}
		\frac{d}{dt}\Big(Q(t)e^{\tilde{m}t}\Big) = \dot{Q}e^{\tilde{m}t} + \tilde{m}Qe^{\tilde{m}t} \leq \lambda e^{\tilde{m}t}.
	\end{equation*}	
	Integrating from 0 to \(t\) for both sides of the above inequality,	
	
	\[ 	\int_0^t \frac{d}{ds}\Big(Q(s)e^{\tilde{m}s}\Big)  ds \leq \int_0^t \lambda e^{\tilde{m}s} ds,  \]
	then $Q(t)e^{\tilde{m}t} - Q(0) \leq \frac{\lambda}{\tilde{m}}(e^{\tilde{m}t} - 1)$, which implies to
	\begin{align*}
		Q(t) & \leq Q(0) e^{-\tilde{m}t} + \frac{\lambda}{\tilde{m}}(1 - e^{-\tilde{m}t})\\
		& = (Q(0)-\frac{\lambda}{\tilde{m}})e^{-\tilde{m}t} +\frac{\lambda}{\tilde{m}},
	\end{align*}
	we can get
	\[Q(t) \to  \frac{\lambda}{\tilde{m}}, \quad t\to \infty. \]		
	Thus, there exists \(T > 0\) such that when \(t \geq T\), $Q(t)\leq \frac{\lambda}{\tilde{m}}$ for any \((u(t), p(t), b(t)) \in \mathbb{R}^3_+\), therefore \(\Gamma\) is a positive invariant set with respect to system (\ref{upb}). We have completed the proof.
\end{proof}

Next, let us continue to prove Theorem \ref{thm2.2}. For the system (\ref{upb}), it is of so-called $K$-competitive system with respect to the partial ordering defined by the convex cone \( K = \{(u, p, b) \in \mathbb{R}^3 :u \geq 0, p \geq 0, b \leq 0\}\) in $\mathrm{R}^3$. Since three-dimensional $K$-competitive systems possess the Poincaré-Bendixon property \citep{smith1995}, the proof of global asymptotic stability property of the equilibrium translates into the orbital stability of any possible orthonormal periodic solution in \(\Gamma\). To this end, we implement the stability criterion developed by Muldowney \citep{muldowney1990} based on the second compound matrices.

To get the equilibrium of the system (\ref{upb}), one need to consider the following equation
\begin{equation*}
	\begin{cases}
		\lambda_u  - \gamma_u u - \tau u p + \sigma b = 0 , \\
		\lambda_p - \gamma_p p - \tau u p + \sigma b = 0, \\
		\tau u p - (\sigma + \delta) b = 0.
	\end{cases}
\end{equation*}
It can be proven that there exists a unique positive equilibrium $E^* = (u^*, p^*, b^*)$ defined by the following relationships
\[ 
b^* = \dfrac{\tau u^*p^*}{\sigma + \delta}, \quad u^* = \dfrac{\lambda_u}{\gamma_u + \tau p^*k}
\] 
and $p^*$ is the unique positive root of $P(x)$ defined by
\[ 
P(x)=\gamma_p \tau k x^2 + (\gamma_p \gamma_u + \tau \lambda_u k - \lambda_p \tau k)x - \lambda_p \gamma_u, \quad   x \geq 0. 
\]
Moreover, the explicit expression for $p^{*}$ is 
\[ p^{*}=\dfrac{1}{2\gamma_{p}\tau k}\left(-(\gamma_{p}\gamma_{u}+\tau\lambda_{u}k - \lambda_{p}\tau k)+\sqrt{\Delta} \right), \]
where $k=1 - \frac{\sigma}{\delta + \sigma} $ and $\Delta=(\gamma_{p}\gamma_{u}+\tau\lambda_{u}k - \lambda_{p}\tau k)^{2}+4\gamma_{p}\tau k\lambda_{p}\gamma_{u}$.

Now, we can prove that $E^*$ is locally asymptotically stable from the following Lemma \ref{4.3}.

\begin{lem}\label{4.3}
	The unique equilibrium $E^* = (u^*, p^*, b^*)$ of system (\ref{upb}) is locally asymptotically stable.
\end{lem}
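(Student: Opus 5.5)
We linearize system (\ref{upb}) at $E^*$ and show that every eigenvalue of the Jacobian has negative real part, using the Routh--Hurwitz criterion for $3\times 3$ matrices. At $E^*=(u^*,p^*,b^*)$ the Jacobian is
\[
J=\begin{pmatrix}
-\gamma_u-\tau p^* & -\tau u^* & \sigma\\
-\tau p^* & -\gamma_p-\tau u^* & \sigma\\
\tau p^* & \tau u^* & -(\sigma+\delta)
\end{pmatrix}.
\]
Write $x=\tau p^*>0$ and $y=\tau u^*>0$. The characteristic polynomial is $\lambda^3+a_1\lambda^2+a_2\lambda+a_3$, with $a_1=-\operatorname{tr}J$, $a_2$ the sum of the principal $2\times2$ minors, and $a_3=-\det J$. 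By Routh--Hurwitz it suffices to check $a_1>0$, $a_3>0$ and $a_1a_2>a_3$.

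Clearly $a_1=\gamma_u+\gamma_p+x+y+\sigma+\delta>0$. For the minors we compute:
\[
M_{12}=(\gamma_u+x)(\gamma_p+y)-xy=\gamma_u\gamma_p+\gamma_u y+\gamma_p x,
\]
\[
M_{13}=(\gamma_u+x)(\sigma+\delta)-\sigma x=\gamma_u(\sigma+\delta)+\delta x,
\]
\[
M_{23}=(\gamma_p+y)(\sigma+\delta)-\sigma y=\gamma_p(\sigma+\delta)+\delta y .
\]
All three are positive, so $a_2=M_{12}+M_{13}+M_{23}>0$.

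For $a_3=-\det J$, add the second row to the third; this leaves the determinant unchanged. The new third row is $(0,\,-\gamma_p,\,-\delta)$. Expanding along it gives
\[
-\det J=\gamma_u\gamma_p(\sigma+\delta)+\delta\gamma_u y+\delta\gamma_p x>0 .
\]
Every term is positive, so $a_3>0$. This expansion is the one step needing real care; I would double-check it directly.

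For the inequality $a_1a_2>a_3$, the strategy is to show that $a_3$ is dominated term by term by monomials in the product $a_1a_2$. The product $(\sigma+\delta)\cdot M_{12}$ contains $\gamma_u\gamma_p(\sigma+\delta)$. The product $\gamma_u\cdot M_{23}$ contains $\gamma_u\delta y$. The product $\gamma_p\cdot M_{13}$ contains $\gamma_p\delta x$. Since $a_1a_2$ is a polynomial with nonnegative coefficients in the positive quantities $\gamma_u,\gamma_p,x,y,\sigma,\delta$, these three monomials cover $a_3$ exactly. The remaining monomials, for instance $\gamma_u\cdot\gamma_u\gamma_p$, are strictly positive, so $a_1a_2-a_3>0$.

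Hence all eigenvalues of $J$ have negative real parts, and $E^*$ is locally asymptotically stable by the linearization principle. The only bookkeeping risk is in the sign computations of the minors and the determinant. No condition such as (\textbf{H}) is needed, because the equilibrium relations are never used beyond the positivity of $u^*$ and $p^*$.
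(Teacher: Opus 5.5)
Your proof is correct and follows the same route as the paper: you linearize at $E^*$, compute $a_1,a_2,a_3$ (your expressions agree with the paper's), and apply Routh--Hurwitz using only the positivity of the parameters and of $u^*,p^*$. Your term-by-term domination argument for $a_1a_2>a_3$ is more reliable than the paper's displayed closed form for $a_1a_2-a_3$, which is not a correct expansion: with $\gamma_u,\gamma_p,\sigma,\delta,\tau u^*,\tau p^*$ all equal to $1$ it gives $24$, whereas $a_1a_2-a_3=54-4=50$, though the positivity conclusion still holds.
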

\begin{proof}
	To analyze the stability, we calculate the Jacobian matrix \(J\) at \(E^*\), as follows
	\begin{equation}\label{J}
		J(E^*)=\begin{bmatrix}
			-\gamma_u-\tau p^* & -\tau u^* & \sigma\\
			-\tau p^* & -\gamma_p-\tau u^* & \sigma\\
			\tau p^* & \tau u^* & -(\sigma+\delta)
		\end{bmatrix}.
	\end{equation}
	
	The characteristic equation of $J(E^*)$ is
	\begin{equation}\label{lambda}
		\lambda^3 + a_1\lambda^2 + a_2\lambda + a_3 = 0,	
	\end{equation}			
	where 
	\begin{align*}
		a_1 &= \gamma_u + \gamma_p + \sigma + \delta + \tau(u^* + p^*), \\
		a_2 &= \gamma_u(\gamma_p + \sigma + \delta) + \gamma_p(\sigma + \delta) + \tau u^*(\gamma_u + \delta) + \tau p^*(\gamma_p + \delta), \\
		a_3 &= \delta \tau (\gamma_u u^* + \gamma_p p^*) + \gamma_u \gamma_p (\sigma + \delta).
	\end{align*}
	A direct calculation showed that
	\begin{align*}
		a_1a_2 - a_3= &\left(  \gamma_p + \sigma +  \tau(u^* + p^*)\right) \left( \gamma_u(\gamma_p + \sigma + \delta)  + \tau u^*\delta + \tau p^*\delta\right)\\
		&+ \gamma_u(\tau u^*\gamma_u + \tau p^*\gamma_p )+\delta\gamma_p(\sigma + \delta).
	\end{align*}	
	Based on the equilibrium relations and the positivity of the parameters, we have \(a_1>0\), \(a_3>0\) and \(a_1a_2 - a_3>0\). According to Routh-Hurwitz criterion, it follows that all the roots of (\ref{lambda}) have negative real parts. Hence, the equilibrium \(E^*\) is locally linearly asymptotically stable. The proof is completed.
\end{proof}

From (\ref{J}), the system (\ref{upb}) is a $K$-competitive system with respect to the partial ordering defined by the special convex cone \( K \). By the Poincaré-Bendixson theorem for three-dimensional competitive systems \citep{smith1995}, the \(\omega\)-limit set of any trajectory in \(\Gamma\) must be an equilibrium point or a non-constant periodic orbit. Consequently, the demonstration of the absence of periodic trajectories within the system is pivotal in substantiating the global asymptotic stability of the equilibrium. Until such a time as the aforementioned condition is demonstrated to be true, the compound matrix method is utilized to demonstrate that the periodic trajectory must be asymptotically stable, if it exists.

\begin{lem}\label{4.5}
	Any nonconstant periodic solution of system (\ref{upb}) is, if it exists, asymptotically orbitally stable.
\end{lem}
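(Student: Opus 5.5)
The plan is to apply Muldowney's criterion \citep{muldowney1990}. Let $\mathcal{P}=\{q(t)=(u(t),p(t),b(t)):0\le t\le\omega\}$ be a nonconstant $\omega$-periodic solution of (\ref{upb}). By Proposition \ref{prop2.1} it lies in $\Gamma$, it is strictly positive, and $u,p,b$ are bounded above and away from zero along it. It then suffices to show that the second compound linear system
\[
\dot z = J^{[2]}(q(t))\,z
\]
is asymptotically stable. Here $J$ is the Jacobian (\ref{J}) evaluated along $q(t)$ rather than at $E^*$. For a $3\times 3$ matrix $(a_{ij})$ the second compound has rows $(a_{11}+a_{22},a_{23},-a_{13})$, $(a_{32},a_{11}+a_{33},a_{12})$ and $(-a_{31},a_{21},a_{22}+a_{33})$. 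For (\ref{upb}) this gives the system
\begin{align*}
\dot z_1&=-(\gamma_u+\gamma_p+\tau(u+p))z_1+\sigma z_2-\sigma z_3,\\
\dot z_2&=\tau u\, z_1-(\gamma_u+\tau p+\sigma+\delta)z_2-\tau u\, z_3,\\
\dot z_3&=-\tau p\, z_1-\tau p\, z_2-(\gamma_p+\tau u+\sigma+\delta)z_3 .
\end{align*}

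I will prove asymptotic stability with a Lyapunov function of the form $V(t,z)=\max\{|z_1|,\ c(t)(|z_2|+|z_3|)\}$, where $c(t)>0$ is a weight still to be chosen. Taking right-hand derivatives and using the sign structure, the off-diagonal terms $-\tau p z_2$ in $\dot z_3$ and $-\tau u z_3$ in $\dot z_2$ are absorbed by the diagonal terms $-\tau p z_2$ and $-\tau u z_3$. This gives
\[
D^+(|z_2|+|z_3|)\le \tau(u+p)|z_1|-(\min\{\gamma_u,\gamma_p\}+\sigma+\delta)(|z_2|+|z_3|).
\]
Similarly,
\[
D^+|z_1|\le-(\gamma_u+\gamma_p+\tau(u+p))|z_1|+\sigma(|z_2|+|z_3|).
\]
Hence $D^+V\le \max\{g_1,g_2\}V$, where
\[
g_1=-(\gamma_u+\gamma_p+\tau(u+p))+\frac{\sigma}{c},\qquad g_2=\frac{\dot c}{c}+c\,\tau(u+p)-(\min\{\gamma_u,\gamma_p\}+\sigma+\delta).
\]
If $\int_0^\omega\max\{g_1,g_2\}\,dt<0$, then $V(t)\to0$ exponentially, so $z\to0$, and Muldowney's theorem yields asymptotic orbital stability of $\mathcal{P}$.

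The main obstacle is choosing $c(t)$. A constant $c$ requires simultaneously $\sigma/c<\gamma_u+\gamma_p+\tau(u+p)$ and $c\,\tau(u+p)<\sigma+\delta+\min\{\gamma_u,\gamma_p\}$, which need not hold for all parameters. The restrictive condition (\textbf{H}) presumably came from such an inequality. I would therefore first try a state-dependent weight such as $c=b/u$ or $c=b/p$, so that $\dot c/c=\dot b/b-\dot u/u$ can be rewritten using (\ref{upb}). For example, $\dot b/b=\tau up/b-(\sigma+\delta)$, and $\dot u/u=\lambda_u/u-\gamma_u-\tau p+\sigma b/u$. The point is that the dangerous terms $\sigma/c$ and $c\,\tau(u+p)$ then become expressions like $\sigma u/b$ and $\tau up/b$, and these should cancel against terms in $\dot c/c$. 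The derivatives $\dot b/b$ and $\dot u/u$ integrate to zero over one period, since $\ln b$ and $\ln u$ are $\omega$-periodic. After this cancellation each $g_i$ should be bounded by $\dot c/c$ (or by $\dot u/u$ or $\dot p/p$) minus a positive constant. Integrating over $[0,\omega]$ then gives a strictly negative value without any parameter restriction. If one weight fails in one of $g_1,g_2$, I would try a different group norm, for instance pairing $\{z_1,z_2\}$ against $z_3$, or weights $b/p$ instead of $b/u$, and check the two estimates again.
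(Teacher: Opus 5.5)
Your setup is correct. The compound system you write down is exactly $J^{[2]}$ along the orbit, and both differential inequalities you derive hold. The gap is the step you flag as the main obstacle: you never produce a weight $c(t)$ with $\int_0^\omega\max\{g_1,g_2\}\,dt<0$, and the candidates you suggest do not close. With $c=b/u$ you get $\sigma/c=\sigma u/b$. That term occurs in none of $\dot u/u$, $\dot p/p$, $\dot b/b$, which contain $\sigma b/u$, $\sigma b/p$ and $\tau up/b$, so it cannot be absorbed. Also, $c\,\tau(u+p)$ then equals $\tau b+\tau bp/u$, not $\tau up/b$ as you assert. With the reciprocal $c=u/b$, $g_1$ does reduce to $\dot u/u-\gamma_p-\tau u-\lambda_u/u$. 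But then $g_2=\dot u/u+\tau u^2/b-\min\{\gamma_u,\gamma_p\}$, and the positive term $\tau u^2/b$ is uncontrolled.

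The real source of the trouble is your norm. In the outer maximum, the coupling $\sigma$ in row 1 is compared with $-(\gamma_u+\gamma_p+\tau(u+p))$. The coupling $\tau(u+p)$ into the second block is compared with $-(\sigma+\delta+\min\{\gamma_u,\gamma_p\})$. These pairings are mismatched. In $J^{[2]}$ the off-diagonal $\sigma$'s cancel against the $-\sigma$ in the diagonals of rows 2 and 3, and the $\tau u,\tau p$ in column 1 cancel against the $-\tau(u+p)$ in row 1. This is a column-wise cancellation, which only the $\ell^1$ norm sees.

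The missing idea is to add your two inequalities instead of taking a maximum, i.e.\ use $W=|z_1|+|z_2|+|z_3|$. Each column's absolute off-diagonal entries ($\tau u+\tau p$, $\sigma+\tau p$, $\sigma+\tau u$) cancel exactly against the corresponding diagonal entry. This gives, for every $t$ and using only $u,p\ge0$,
\[
D^+W\le-(\gamma_u+\gamma_p)|z_1|-(\gamma_u+\delta)|z_2|-(\gamma_p+\delta)|z_3|\le -C\,W,\qquad C=\min\{\gamma_u+\gamma_p,\gamma_u+\delta,\gamma_p+\delta\}>0 .
\]
Hence $W(t)\le W(0)e^{-Ct}$, the compound system is asymptotically stable, and Muldowney's theorem finishes the proof. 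No state-dependent weight, no periodicity of $\ln u$ or $\ln b$, and no parameter restriction is needed. This is exactly the paper's argument.
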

\begin{proof}
	Let $(u(t), p(t), b(t))$ be a periodic solution of system (\ref{upb}) with minimal period $\omega > 0$. According to second compound matrices method developed by Muldowney \citep{muldowney1990}, the orbital asymptotic stability of this periodic solution is equivalent to the asymptotic stability of the associated second additive compound linear system		 
	\begin{equation}\label{3.2}
		\frac{dX}{dt} = J^{[2]}_{upb}(u, p, b)X,
	\end{equation}		 
	where $X = (X_1, X_2, X_3)^T$ and $J^{[2]}_{upb}$ is the second additive compound matrix of the Jacobian of the system (\ref{upb}) as follows
	
	\begin{equation*}
		J^{[2]}_{upb} = \begin{pmatrix}
			-(\gamma_u + \gamma_p + \tau(p + u)) & \sigma & -\sigma \\
			\tau u & -(\gamma_u + \sigma + \delta + \tau p) & -\tau u \\
			-\tau p & -\tau p & -(\gamma_p + \sigma + \delta + \tau u)
		\end{pmatrix}.
	\end{equation*}
	Define the function  $W(t) = |X_1| + |X_2| + |X_3|$, then $D_+W(t) = \sum_{i=1}^{3} D_+(|X_i|)$. With the inequality \(\pm \text{sign}(X_i) X_j \leq |X_j|\) (for any \(i \neq j\)), the specific expression for each \(D_+(|X_i|)\) is as follows,	
	\begin{align*}
		D_+(|X_1|)&= \text{sign}(X_1) \cdot \frac{dX_1}{dt} = \text{sign}(X_1) \left[ J_{11}^{[2]}X_1 + J_{12}^{[2]}X_2 + J_{13}^{[2]}X_3\right]\\	
		&=\text{sign}(X_1) \left[ -(\gamma_u + \gamma_p + \tau(p+u))X_1 + \sigma X_2 - \sigma X_3 \right]  \\
		&= -(\gamma_u + \gamma_p + \tau(p+u))|X_1| + \sigma \cdot \text{sign}(X_1)X_2 - \sigma \cdot \text{sign}(X_1)X_3\\
		&\leq -(\gamma_u + \gamma_p + \tau(p+u))|X_1| + \sigma |X_2| + \sigma |X_3|,
	\end{align*}		
	\begin{align*}
		D_+(|X_2|) &= \text{sign}(X_2) \cdot \frac{dX_2}{dt} = \text{sign}(X_2) \left[ J_{21}^{[2]}X_1 + J_{22}^{[2]}X_2 + J_{23}^{[2]}X_3\right]\\
		&= \text{sign}(X_2) \left[ \tau u X_1 - (\gamma_u + \sigma + \delta + \tau p) X_2 - \tau u X_3 \right] \\
		&= \tau u \cdot \text{sign}(X_2) X_1 - (\gamma_u + \sigma + \delta + \tau p) |X_2| - \tau u \cdot \text{sign}(X_2) X_3\\
		&\leq \tau u |X_1| - (\gamma_u + \sigma + \delta + \tau p) |X_2| + \tau u |X_3|
	\end{align*}
	and
	\begin{align*}
		D_+(|X_3|) &= \text{sign}(X_3) \cdot \frac{dX_3}{dt} = \text{sign}(X_3) \left[ J_{31}^{[2]}X_1 + J_{32}^{[2]}X_2 + J_{33}^{[2]}X_3\right]\\
		&= \text{sign}(X_3) \left[ -\tau p X_1 - \tau p X_2 - (\gamma_p + \sigma + \delta + \tau u) X_3 \right] \\
		&= -\tau p \cdot \text{sign}(X_3) X_1 - \tau p \cdot \text{sign}(X_3) X_2 - (\gamma_p + \sigma + \delta + \tau u) |X_3|\\
		&\leq \tau p |X_1| + \tau p |X_2| - (\gamma_p + \sigma + \delta + \tau u) |X_3|.
	\end{align*}
	Summing these inequalities yields
	\begin{align*}
		D_+W(t) \leq&  -(\gamma_u + \gamma_p + \tau(p+u))|X_1| + \sigma |X_2| + \sigma |X_3|  \\
		&+  \tau u |X_1| - (\gamma_u + \sigma + \delta + \tau p)|X_2| + \tau u |X_3|  \\
		&+\tau p |X_1| + \tau p |X_2| - (\gamma_p + \sigma + \delta + \tau u)|X_3|\\
		=&-(\gamma_u + \gamma_p)|X_1|-(\gamma_u + \delta )|X_2|-(\gamma_p + \delta )|X_3|\\
		\leq& -CW(t) 
	\end{align*}
	where \(C = \min\{\gamma_u + \gamma_p, \gamma_u + \delta, \gamma_p + \delta\} > 0\). The differential inequality $D_{+}W(t) \le -CW(t)$ implies $W(t) \le W(0)e^{-Ct}$. We can obtain that $W(t) \rightarrow 0$ as $t \rightarrow \infty$. Hence, the linear system (\ref{3.2}) is asymptotically stable and the periodic solution $(u(t), p(t), b(t))$ is asymptotically orbitally stable by \citep[Theorem 4.2]{muldowney1990}. We have completed the proof.
\end{proof}	
We give the proof of the Theorem \ref{thm2.2} based on the above Lemmas.
\begin{proof}[Proof of Theorem 2.2]
	Following the form of system (\ref{upb}), $\Gamma \subseteq \text{Int}(\mathbb{R}^3_+)$ is a bounded positively invariant region for the three-dimensional system (\ref{upb}). Let $\mathcal{A}$ be the basin of attraction of the locally asymptotically stable equilibrium $E^*$. Then $\mathcal{A}$ is a nonempty relatively open subset of $\Gamma$. Denote $\partial_\Gamma \mathcal{A}$ the boundary of $\mathcal{A}$ relative to $\Gamma$. Clearly $\partial_\Gamma \mathcal{A}$ is invariant. Let $x=(u,p,b) \in \partial_\Gamma \mathcal{A}$. Then the omega-limit set $\omega(x) \subseteq \partial_\Gamma \mathcal{A}$ and $E^* \notin \omega(x)$. Since $E^*$ is the unique positive equilibrium in $\Gamma$. By Poincaré-Bendixson Theorem, $\omega(x) \subseteq \partial_\Gamma \mathcal{A}$, $\omega(x)$ is a periodic orbit of system (\ref{upb}).
	
	From the assumption, $\omega(x)$ is orbitally asymptotically stable. One can choose a point $a \in \mathcal{A}$ sufficiently close to $\omega(x)$. On one hand, $a$ is attracted to $E^*$. On the other hand, $a$ will be asymptotic to the orbitally stable periodic orbit $\omega(x)$, a contradiction. Thus we prove that $E^*$ is globally asymptotically stable in $\Gamma$. The proof is completed.
\end{proof}

Now, we give the proof of Theorem \ref{thm2.3}.
\begin{proof}[Proof of Theorem 2.3]
For system (\ref{Aupb}), $\tilde{A} = \alpha\tilde{u}^n/\mu=0$ when $\alpha = 0$. Moreover system (\ref{upb}) is the part of the limiting system of system (\ref{Aupb}). By Theorem \ref{thm2.2}, the system (\ref{upb}) is globally asymptotically stable. It is widely known that, if system is globally asymptotically stable, all compact, connected, chain recurrent, and invariant sets D must necessarily be singleton sets consisting of this equilibrium. For system (\ref{upb}), $D=\{E^*\}$, by \citep[Theorem 3.1]{mischaikow1995}, D is the \(\omega\)-limit set of system (\ref{A0upb}). With $\lim_{t \to \infty} A(t) = 0$, the \(\omega\)-limit set of system (\ref{Aupb}) is also a singleton set, i.e., the unique positive equilibrium $\tilde{E} = (\tilde{A}, \tilde{u}, \tilde{p}, \tilde{b})$ in $\Sigma$ is globally asymptotically stable. The proof is now complete.
\end{proof}
%%%%%%%%%%%%%%%%%%%%%%%%%%%%%%%%%%%%%%%%%%%%%%%%%%%%%%%%%%%%%%
%%%%%%%%%%%%%%%%%%%%%%%%%%%%%%%%%%%%%%%%%%%%%%%%%%%%%%%%%%%%%%
%%%%%%%%%%%%%%%%%			Discussions
%%%%%%%%%%%%%%%%%%%%%%%%%%%%%%%%%%%%%%%%%%%%%%%%%%%%%%%%%%%%%%
%%%%%%%%%%%%%%%%%%%%%%%%%%%%%%%%%%%%%%%%%%%%%%%%%%%%%%%%%%%%%%
\section{Numerical Simulations}\label{secsim}
%%%%%%%%%%%%%%%%%%%%%%%%%%%%%%%%%%%%%%%%%%%%%%%%%%%%%%%%%%%%%%
%%%%%%%%%%%%%%%%%%%%%%%%%%%%%%%%%%%%%%%%%%%%%%%%%%%%%%%%%%%%%%	

In this section, to further validate the theoretical findings and explore the dynamic behaviors of system (\ref{Aupb}) under biological scenarios, we conduct numerical simulations using the adaptive step size 4/5th Order Runge-Kutta-Fehlberg method, which ensures high precision in solving the system of ordinary differential equations. The central focus of these simulations is two aspects. Firstly, we seek to verify the global asymptotic stability of the equilibrium under the condition that no new plaques formed. Secondly, we aim at evaluating the therapeutic effects of various targeted drugs, utilizing a methodical adjustment of the relevant parameters. Unless specified otherwise, all simulations are performed with the following parameters: \(\mu=0.01\), \(\lambda_{u}=0.8\), \(\gamma_{u}=0.5\), \(\tau=0.36\), \(\sigma=0.2\), \(\rho=0.006\), \(n=3\), \(\lambda_{p}=0.6\), \(\delta=0.12\), and \(\gamma_{p}=0.5\). Notably, this parameter set violates condition (\textbf{H}), as calculation gives $ 1 + 2 (\delta + \gamma_u) / \sigma =7.20 $, $ \delta / 2\gamma_p=0.12 $ and $ \gamma_p / \sigma=2.50$, which does not satisfy the inequality chain in (\textbf{H}).

\subsection{Validation of Global Asymptotic Stability}\label{sec4.1}	
To confirm the global asymptotic stability of the equilibrium in the absence of new \(A\beta\) plaque formation, i.e., \(\alpha=0\), numerical simulations demonstrate that the dynamics of \(PrP^C\), \(A\beta\) oligomers , and \(A\beta\)-x-\(PrP^C\) complex using 1000 sets of random initial conditions, where the initial concentrations of $p$, $u$, and $b$ were uniformly sampled within the range (0, 2). This range of initial conditions is chosen to reflect the variability of pathological states observed in Alzheimer's disease.
\begin{figure}[htbp]
	\centering
	% 第一张子图（左）
	\begin{subfigure}[b]{0.45\linewidth}  % [b]底部对齐，宽度0.45倍文本宽度
		\centering
		\includegraphics[width=\linewidth]{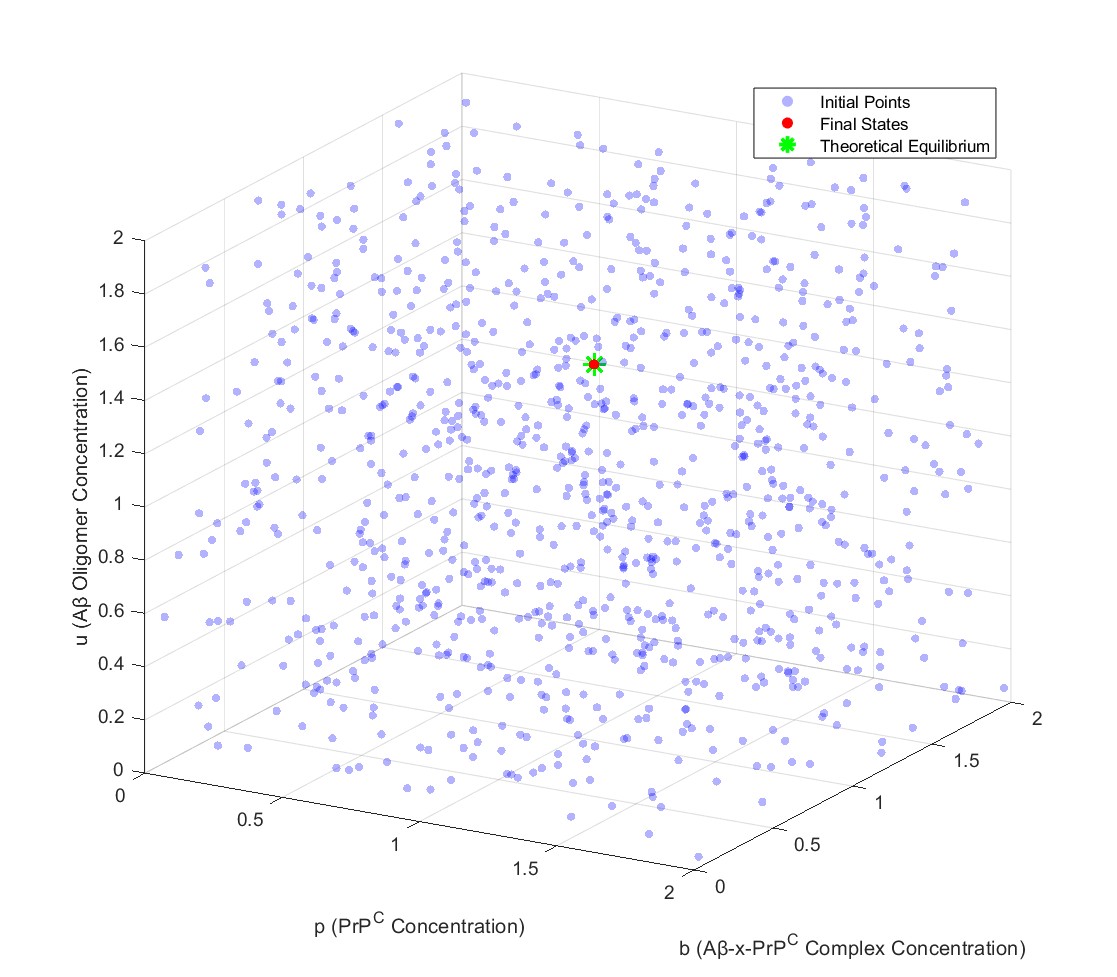}  % 子图宽度占满subfigure
		\caption{}  % 子图标题（自动编号为(a)(b)...）
		\label{sub1}
	\end{subfigure}
	\hfill  % 子图之间的水平间距（可改为\quad等）
	% 第二张子图（右）
	\begin{subfigure}[b]{0.45\linewidth}
		\centering
		\includegraphics[width=\linewidth]{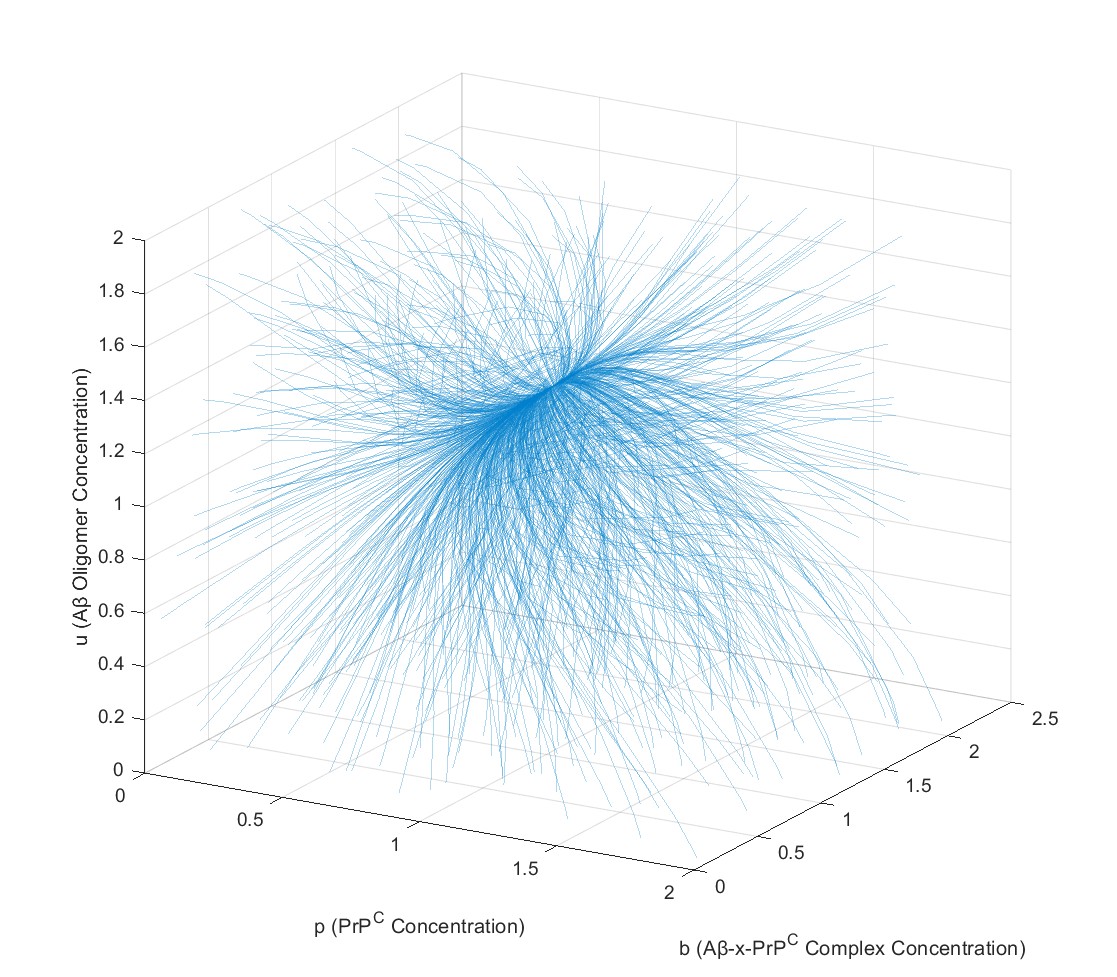}
		\caption{}
		\label{sub2}
	\end{subfigure}
	\caption{(a) 3D distribution of initial points, final states, and equilibrium in p-b-u space (1000 simulations). (b) 3D trajectories of $p-b-u$ (1000 initial conditions). } 
	\label{fig1}  
\end{figure}

In Fig. \ref{fig1}(a), the blue scatter points represent the distribution of initial conditions, while the red points and green star symbol indicate the final states and equilibrium, respectively. The curves in Fig. \ref{fig1}(b) represent the evolutionary trajectories of the variables. The convergence in Fig. \ref{fig1} confirms that regardless of the initial combination of $p$, $u$, and $b$, the system inevitably evolves toward the pathological steady state, which aligns with the theoretical conclusion that the equilibrium is globally asymptotically stable when \(\alpha=0\). Therefore, results in Fig. \ref{fig1} clearly demonstrate that despite significant differences in initial conditions, all trajectories converg to a narrow region around the unique equilibrium \(\tilde{E}\) without condition (\textbf{H}) in \citep{helal2014}.

Notably, this convergence implies that even if new plaque nucleation is blocked, existing \(A\beta\) oligomers will continuously bind to \(PrP^C\) to form neurotoxic complex. This provides a theoretical basis for understanding the dynamic regulation of $A\beta$ and $PrP^{C}$ in Alzheimer's disease, and also offers potential ideas for targeted intervention.

\subsection{Evaluation of Global Therapeutic Strategies}\label{sec4.2}	
In this subsection, we will turn to simulate the effects of different therapeutic strategies on disease progression.  In recent years, Alzheimer's disease drug treatment has shifted from long-term symptomatic improvement to $A\beta$-targeted immunotherapy, gradually entering a new phase of disease-modifying treatment (DMT) \citep{breijyeh2020,selkoe2016}. Among them, anti-$A\beta$ monoclonal antibodies primarily eliminate $A\beta$ deposits in the brain through immune clearance, including aducanumab, lecanemab, and donanemab \citep{hussain2024,sims2023,vandyck2023}. In addition, with the discovery of the mechanism by which $PrP^{C}$ binds to $A\beta$, targeting the cellular prion protein and the $A\beta$-x-$PrP^{C}$ complex have also entered the experimental stage \citep{moussa2021}. In the following text, we try to explore the effects of different drugs on Alzheimer's disease by changing the values of corresponding parameters. For convenience, in the following simulations, we take the initial values as ($A_0$, $u_0$, $p_0$, $b_0$) = (0, 0.2, 0.3, 0.2), and more generally, we choose $\alpha = 0.3$.

\begin{figure}[h]  
	\centering  
	\includegraphics[width=1\linewidth]{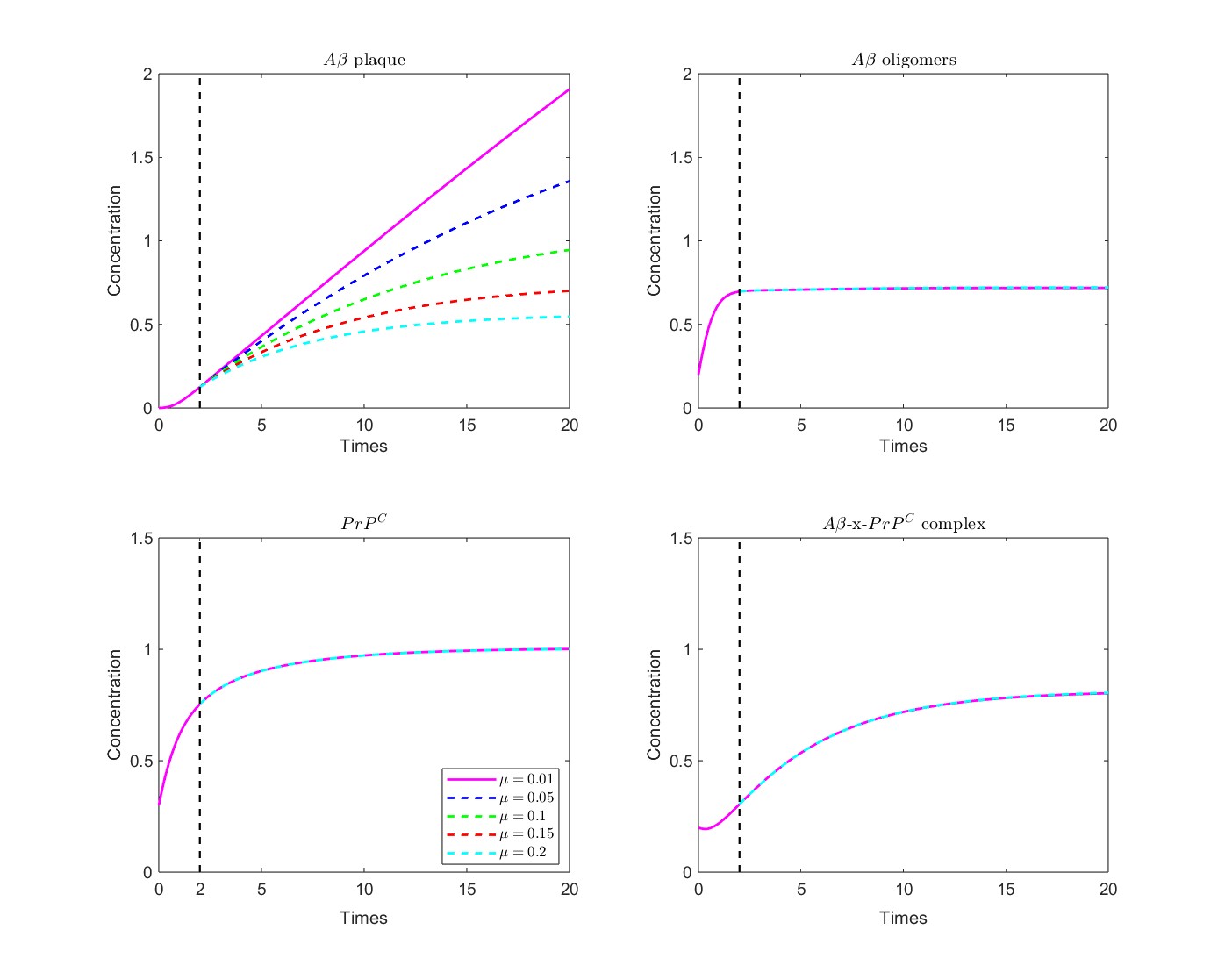}  
	\caption{Impacts of different degradation rate $\mu$ on the temporal dynamics of $A\beta$ plaques, $A\beta$ oligomers, $PrP^{C}$, and $A\beta$-x-$PrP^{C}$ complex}  
	\label{fig2} 
\end{figure}

To investigate the effects of anti-$A\beta$ monoclonal antibodies on the disease, Fig. \ref{fig2} and Fig. \ref{fig3} show simulations of the drug's clearance effect on $A\beta$ by altering the values of $\mu$ and $\gamma_{u}$ starting from $t=2$. As shown in Fig. \ref{fig2}, with $\mu$ increases, the clearance effect on $A\beta$ plaques becomes stronger with a slower cumulative effect, but it does not significantly alter the trends of other substances. Fig. \ref{fig3} demonstrates the effects of different $\gamma_{u}$ values on the time-dependent changes in the concentrations of key Alzheimer's disease molecules. As $\gamma_{u}$ increases, the accumulation rates of $A\beta$ plaques and $A\beta$-x-$PrP^{C}$ complex decrease, resulting in lower final concentrations. It also reduces the concentration of $A\beta$ oligomers, indicating that anti-$A\beta$ monoclonal antibodies have some efficiency on Alzheimer's disease intervention but have limited effects on $A\beta$-x-$PrP^{C}$ complex.

\begin{figure}[h]  
	\centering  
	\includegraphics[width=1\linewidth]{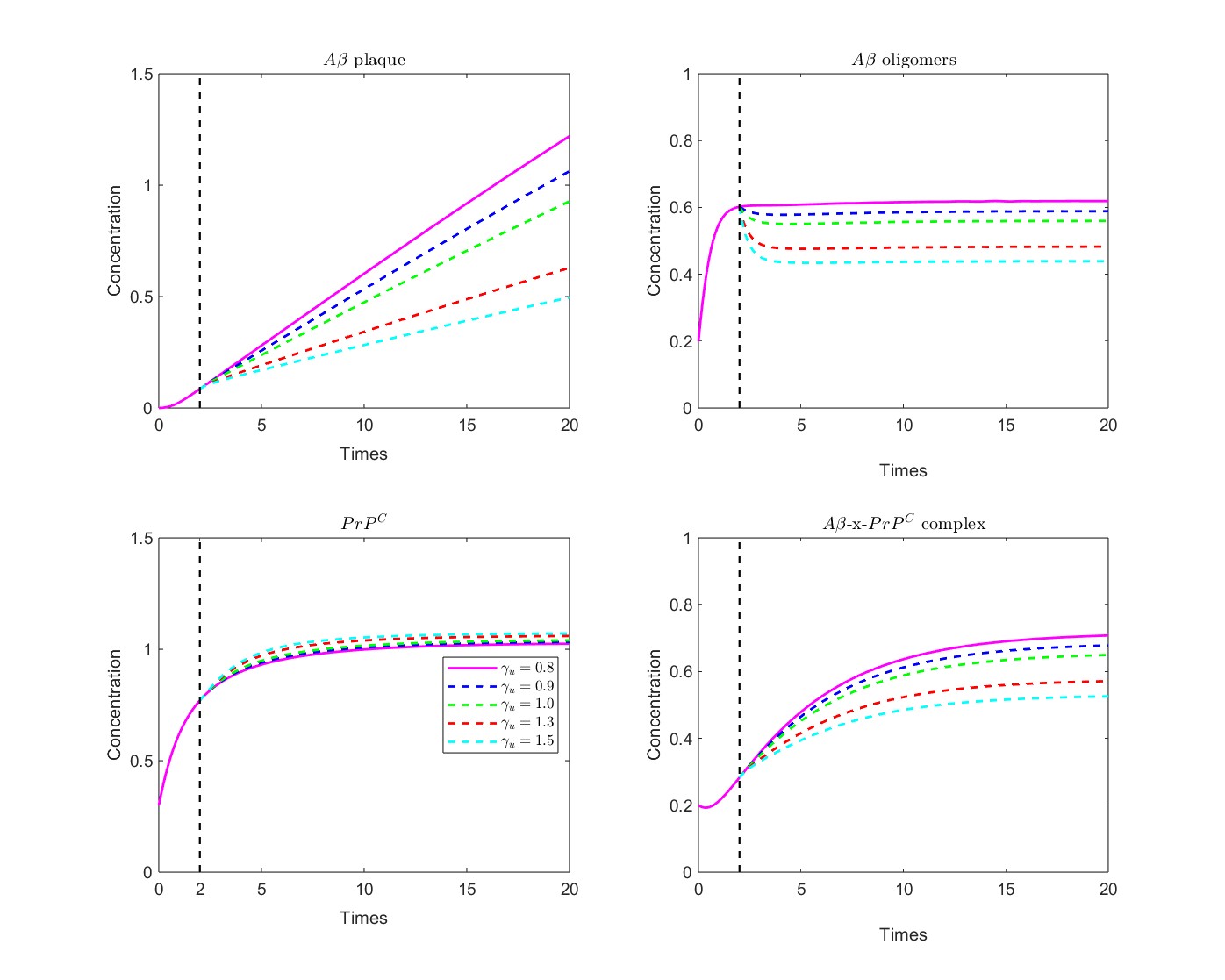}  
	\caption{Impacts of different degradation rate $\gamma_{u}$ values on $A\beta$ plaques, $A\beta$ oligomers, $PrP^{C}$, and $A\beta$-x-$PrP^{C}$ complex}  
	\label{fig3} 
\end{figure}

We simulate the effects of different \(\gamma_p\) and \(\tau\) on the dynamics of key molecules in Alzheimer's disease as shown in Fig. \ref{fig4} and Fig. \ref{fig5}. An increase in \(\gamma_p\) indicates targeting $PrP^{C}$, reducing $PrP^{C}$ concentration and thereby decreasing binding with \(A\beta\) oligomers. Simulations in Fig. \ref{fig4} show that this intervention has minimal effect on \(A\beta\) plaques or \(A\beta\) oligomers but more significantly decreases complex concentrations, as reduced \(PrP^C\) limits complex formation. A decrease in \(\tau\) indicates inhibition of complex formation, simulating a bispecific antibody (\(A\beta\)-$PrP^{C}$), where one end targets \(A\beta\) oligomers and the other end targets $PrP^{C}$. As shown in Fig. \ref{fig5}, the bispecific antibody has a more pronounced effect on reducing complex formation. These results reveal the targeting differences of different regulatory parameters, providing quantitative references for elucidating Alzheimer's disease pathological mechanisms and developing intervention strategies.

\begin{figure}[h]  
	\centering  
	\includegraphics[width=1\linewidth]{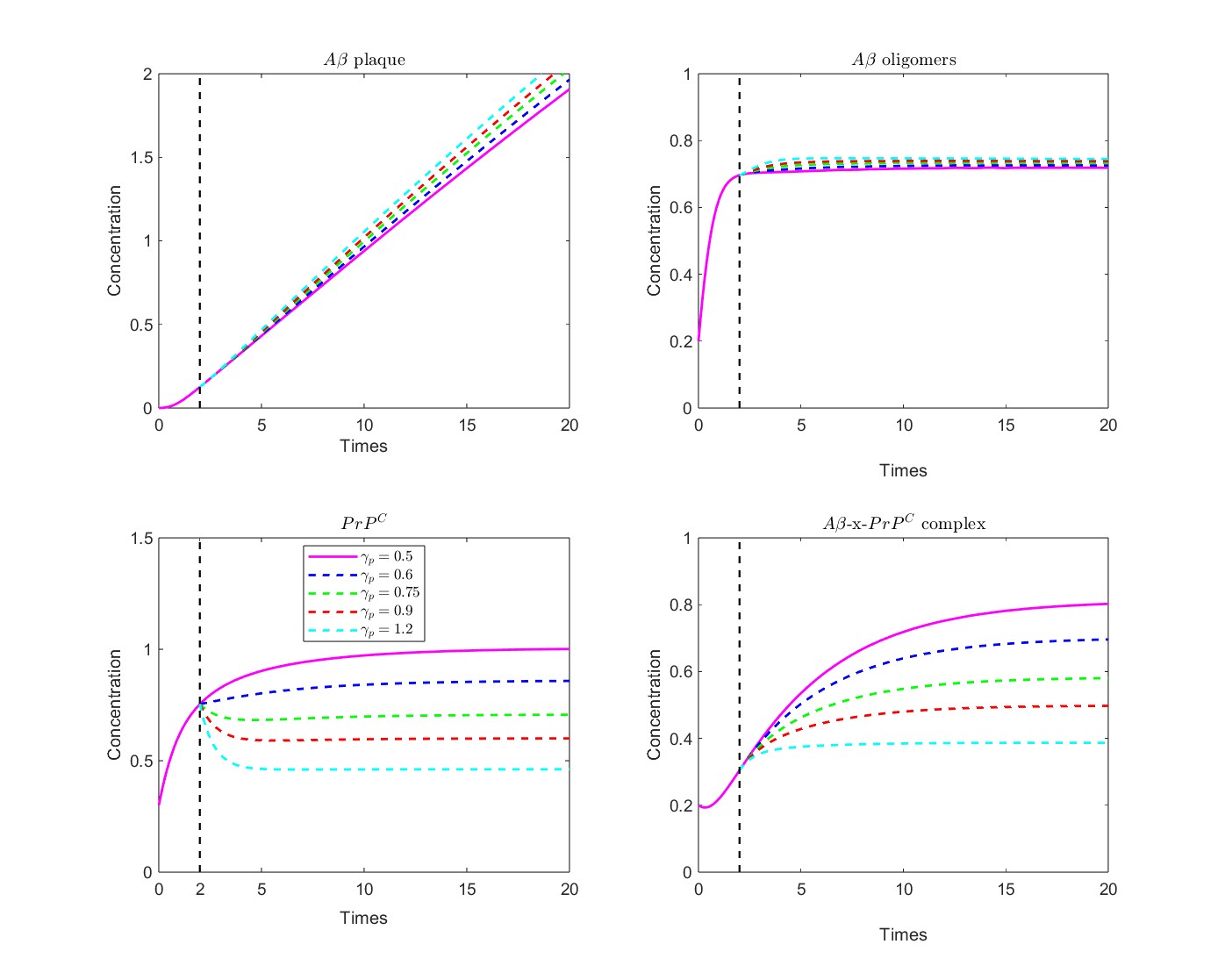}  
	\caption{Impacts of different degradation rate $\gamma_{p}$ on $A\beta$ plaques, $A\beta$ oligomers, $PrP^{C}$, and $A\beta$-x-$PrP^{C}$ complex}  
	\label{fig4} 
\end{figure}
\begin{figure}[h]  
	\centering  
	\includegraphics[width=1\linewidth]{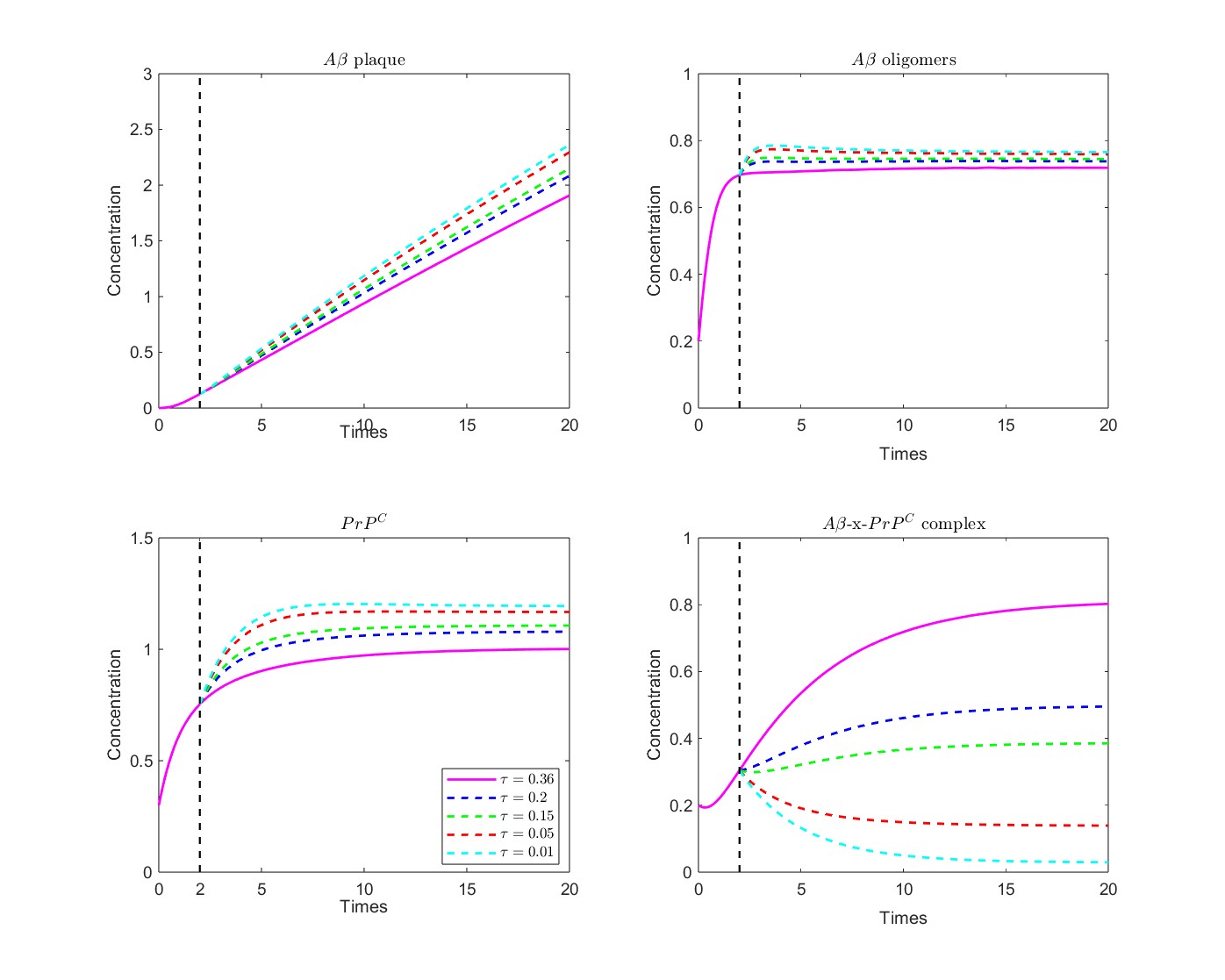}  
	\caption{The effect of different $\tau$ on the changes in A, u, p, and b}  
	\label{fig5} 
\end{figure}

Overall, these simulations reveal that while anti-\(A\beta\) therapies primarily reduce plaque burden, interventions targeting \(PrP^C\) or \(A\beta\)-\(PrP^C\) interactions are more effective at lowering neurotoxic complex. These findings support the development of multi-target strategies combining plaque clearance and complex inhibition are more promising for regulating pathological homeostasis.

\section{Discussion}\label{secdis}

Mathematical models have emerged as a powerful tool with which to understand the complex pathological mechanisms of Alzheimer's disease, particularly with regard to the dynamics of key molecules driving neurodegeneration. It is widely accepted that among the core pathological features, $\beta$-amyloid ($A\beta$) oligomers, cellular prion protein ($PrP^C$), and their toxic complex are critical mediators of synaptic dysfunction and neuronal loss. Previous studies have sought to develop models of the interactions between these components. Notably, Helal et al. \citep{helal2014} made a significant contribution to the research by developing models of molecular dynamics related to Alzheimer's disease. However, the analysis was dependent on a specific condition (\textbf{H}), which limits the generalizability of the conclusions. In the meantime, the majority of extant models concentrate exclusively on either the formation and clearance of $A\beta$ plaques or lack rigorous mathematical proof of the overall stability of the equilibrium state. The latter is a critical factor for understanding whether the pathological process tends towards a steady state.

Based on this, our study lies in clarifying the global asymptotic stability of the unique equilibrium in the (\ref{Aupb}) model under the condition of no new $A\beta$ plaque nucleation (i.e., $\alpha=0$). This result extends the prior work of Helal et al. \citep{helal2014} by eliminating the need for the specific condition (\textbf{H}), thereby enhancing the model’s applicability to more generalized Alzheimer's disease pathological scenarios. From a dynamical perspective, this explains that even with blocked new plaque formation, existing $A\beta$ oligomers continue binding \(PrP^C\), driving toxic complex accumulation and sustaining pathology in any situation. Theoretically, we demonstrate that monotonic dynamical systems theory and compound matrix methods provide a robust analytical framework for high-dimensional biological models. This framework not only resolves the global stability problem to the Alzheimer's disease model but also offers a reusable template for analyzing high-dimensional models of other neurodegenerative diseases.

In addition, consistent with clinical observations, simulations confirm that current drug-based interventions can alter the rate of disease progression,  but cannot fully halt pathological processes. For example, anti-$A\beta$ antibodies mainly reduce the buildup of $A\beta$ plaques and free $A\beta$ oligomers. While this directly affects a key pathological feature, it has limited impact on existing $A\beta$-x-$PrP^C$ complex, which are major drivers of neurotoxicity. In contrast, interventions targeting $PrP^C$ or the $A\beta$-x-$PrP^C$ complex itself more effectively reduce the overall levels of toxic species. These findings suggest that interfering with the interaction between $A\beta$ and $PrP^C$ may represent a more comprehensive therapeutic strategy than targeting $A\beta$ generation or plaque formation alone.

Although the assumption $\alpha = 0$ is useful for analysis, it oversimplifies the dynamic interactions between the formation and clearance of new plaques in progressive Alzheimer's disease. The stability of the system under conditions of active plaque nucleation has not yet been rigorously proven mathematically. Further research could be conducted to determine pathological progression and provide actionable insights for optimizing the timing of early intervention.

\bmhead{Acknowledgements}

	The authors are grateful to Prof. Yi Wang  in University of Science and Technology of China for his constructive suggestions and valuable guidance. 

\section*{Declarations}
\textbf{Conflicts of Interest} The authors declare that they have no conflicts of interest.

\vspace{7pt}
\noindent
\textbf{Ethics} No AI-assisted tools were used during the writing and revision of this manuscript.

\bibliography{sn-bibliography}

%{
%	\parskip=0.1pt        % 
%	\linespread{1}   % 
%	\setlength{\baselineskip}{9pt}
%	\bibliography{sn-bibliography}
%}

\end{document}